\definecolor{green}{rgb}{0,0.6,0.2}
\definecolor{blue}{rgb}{0,0,1}
\theoremstyle{plain}
\newtheorem{neu}{}[section]
\newtheorem{Cor}[neu]{Corollary}
\newtheorem*{Cor*}{Corollary}
\newtheorem{Thm}[neu]{Theorem}
\newtheorem*{Thm*}{Theorem}
\newtheorem{Prop}[neu]{Proposition}
\newtheorem*{Prop*}{Proposition}
\theoremstyle{definition}
\newtheorem{Lemma}[neu]{Lemma}
\newtheorem*{Rmk*}{Remark}
\newtheorem{Rmk}[neu]{Remark}
\newtheorem*{Ex*}{Example}
\newtheorem*{Qu*}{Question}
\theoremstyle{remark}
\theoremstyle{definition}
\newcommand{\x}{\times}
\newcommand{\p}{\partial}
\newcommand{\Om}{\Omega}
\newcommand{\into}{\hookrightarrow}
\newcommand{\N}{{\mathbb{N}}}
\newcommand{\Z}{{\mathbb{Z}}}
\newcommand{\R}{{\mathbb{R}}}
\newcommand{\Q}{{\mathbb{Q}}}
\newcommand{\Id}{\mathrm{Id}}
\newcommand{\Fix}{\mathrm{Fix}\,}
\newcommand{\beq}{\begin{equation}}
\newcommand{\beqn}{\begin{equation}\nonumber}
\newcommand{\eeq}{\end{equation}}
\newcommand{\bea}{\begin{equation}\begin{aligned}}
\newcommand{\bean}{\begin{equation}\begin{aligned}\nonumber}
\newcommand{\eea}{\end{aligned}\end{equation}}
\numberwithin{equation}{section}
\numberwithin{figure}{section}
\begin{document}
\title{On reversible maps and symmetric periodic points}
\author{Jungsoo Kang}

\address{Mathematisches Institut,
Westf\"alische Wilhelms-Universit\"at M\"unster, M\"unster, Germany}
\email{jungsoo.kang@me.com,\;jkang\_01@uni-muenster.de}

\maketitle

\begin{abstract}
In reversible dynamical systems, it is frequently of importance to understand symmetric features. The aim of this paper is to explore symmetric periodic points of reversible maps on planar domains invariant under a reflection. We extend Franks' theorem on a dichotomy of the number of periodic points of area preserving maps on the annulus to symmetric periodic points of area preserving reversible maps. Interestingly, even a non-symmetric periodic point guarantees infinitely many symmetric periodic points. We prove an analogous statement for symmetric odd-periodic points of area preserving reversible maps isotopic to the identity, which can be applied to 
dynamical systems with double symmetries. Our approach is simple, elementary and far from Franks' proof. We also show that a reversible map has a symmetric fixed point if and only if it is a twist map which generalizes a boundary twist condition on the closed annulus in the sense of Poincar\'e-Birkhoff.  Applications to symmetric periodic orbits in reversible dynamical systems with two degrees of freedom are briefly  discussed.
\end{abstract}

\setcounter{tocdepth}{1}
\tableofcontents

\section{Main results and applications}
The study on symmetric features of reversible dynamical systems is of great importance. This paper is majorly concerned with symmetric periodic points of reversible maps on planar domains, which can be applied to find symmetric periodic orbits of reversible dynamical systems with two degrees of freedom possessing global surfaces of section invariant under symmetries, see below. Among other studies on symmetric periodic points, we refer the reader to \cite{Bir15,DeV54,Dev76} which are related to our approach. \\[-1.5ex]

Let $I$ be the reflection on $\R^2$ given by
$$
I:\R^2\to\R^2,\quad (x,y)\mapsto(-x,y).
$$
A connected planar domain $\Omega\subset\R^2$ is said to be {\bf invariant} if $I(\Omega)=\Omega$. Then $I$ descends to a reflection $I_\Omega$ on an invariant domain $\Omega$. A homeomorphism $f$ on an invariant domain $(\Omega,I_\Om)$ obeying  
$$
f\circ I_\Omega=I_\Omega\circ f^{-1}
$$
is called a {\bf reversible map}. A point $z\in\Omega$ is called a {\bf symmetric periodic point} of $f$ if 
$$
f^k(z)=z,\quad f^\ell(z)=I_\Omega(z) \quad\textrm{for some }\; k,\,\ell\in\N.
$$
The minimal number $k\in\N$ satisfying the requirement is called a {\bf period} of a symmetric fixed point $z$ of $f$. 
In particular if $k=\ell=1$, $z$ is called a {\bf symmetric fixed point}. Note that symmetric fixed points lie on the fixed locus $\Fix I_\Omega$ of $I_\Omega$. Recall that a point meeting the first condition is called a periodic point of $f$. Throughout this paper, we always assume that $\Omega\subset\R^2$ is an invariant connected domain with a reflection $I_\Om$ and the following invariant domains are of interest to us.
$$
A:=\{z\in\R^2\,|\,1\leq|z|\leq2\},\quad S:=\{(x,y)\in\R^2\,|\,0\leq|y|\leq1\},\quad D:=\{z\in\R^2\,|\,|z|\leq 1\}
$$
and 
$$
 \mathring A:=\{z\in\R^2\,|\,1<|z|<2\},\quad \mathring D:=\{z\in\R^2\,|\,|z|< 1\}.
$$

A beautiful theorem by Franks \cite{Fra92,Fra96} states that every area preserving homeomorphism on $A$ or $\mathring A$ with a periodic point has infinitely many interior periodic points. Our first result is to extend this dichotomy to  symmetric periodic points of reversible maps.

\begin{Thm}\label{thm:Franks-like theorem}
Every area preserving reversible map on $A$ or $\mathring A$ is either periodic point free or has infinitely many interior  symmetric periodic points.
\end{Thm}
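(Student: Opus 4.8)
The plan is to recast symmetric periodic points as intersection points, under iteration, of the ``symmetry lines'' attached to the reversible structure, and then to force such intersections out of an arbitrary periodic point by a Brouwer–translation argument in the annulus; Franks' theorem itself is not used. Write $R:=I_\Om$ and $S:=f\circ R$, $S':=R\circ f$. From $f\circ R=R\circ f^{-1}$ one gets $R f R=f^{-1}$, hence $S^2=S'^2=\id$, so $R,S,S'$ are involutions (orientation-reversing — $f$ being orientation preserving, or else replace $f$ by $f^2$, which is still reversible and whose symmetric periodic points are symmetric periodic for $f$), with $f=S\circ R=R\circ S'$ and $S'=f^{-1}\circ S\circ f$. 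Their fixed-point sets, the symmetry lines, are properly embedded $1$-manifolds; on $A$ and on $\mathring A$ the locus $\Fix R=\{x=0\}\cap\Om$ is a pair of essential arcs (``spokes'') cutting $\Om$ into two disks exchanged by $R$. The identity $f^{-k}=R f^{k}R$ shows that $w\in\Fix R$ is symmetric periodic as soon as some $f^{m}(w)$ returns to $\Fix R$ (then $f^{2m}(w)=w$) or lands on $\Fix S'$ (then $f^{2m+1}(w)=w$); conversely, on a symmetric periodic orbit $\mathcal O$ the reversor $R$ conjugates the cyclic $f$-action to its inverse, i.e.\ acts as a reflection of the orbit polygon, so it fixes a point of $\mathcal O$ (which then lies on $\Fix R$) or swaps two adjacent points $w,f(w)$ (whence $w\in\Fix S'$ and $f(w)\in\Fix S$). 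Thus Theorem~\ref{thm:Franks-like theorem} is equivalent to: \emph{if $f$ has a periodic point, then infinitely many $f$-orbits meet $\Fix R\cup\Fix S$.}

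If $z_0$ has period $n$, then $F:=f^{n}$ is area preserving and reversible with the same reversor ($f^{n}R=Rf^{-n}$) and $\Fix F\ni z_0$; it suffices to find a symmetric periodic point of $F$. If $\Fix F$ meets a spoke we are done. Otherwise $\Fix F$ is a compact $R$-invariant set disjoint from $\Fix R$, hence contained in the two disks $U_\pm=\{\pm x>0\}\cap\Om$, with the essential arcs of $\Fix F$ in $U_+$ and $U_-$ (if any) exchanged by $R$. Fix a spoke $\alpha$; if the essential arcs $\alpha,F(\alpha),F^{2}(\alpha),\dots$ were pairwise disjoint they would be cyclically ordered around $\Om$, $F$ would permute them and their complementary regions freely, and finiteness of the total area together with area preservation would force, à la Brouwer's plane translation theorem, that $F$ has no fixed point on the swept-out sub-surface — and cutting along any essential arc of $\Fix F$ reduces the situation further to a disk on which $F$ fixes part of the boundary pointwise, commutes up to inversion with the induced reflection, and to which Brouwer's theorem applies directly. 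Either way a fixed point of $F$ is excluded, a contradiction; so $\alpha\cap F^{k}(\alpha)\ne\emptyset$ for some $k\ge1$, and by the previous paragraph any such intersection point is a symmetric periodic point of $F$, hence of $f$. On $\mathring A$ one runs this inside a compact essential $R$-invariant sub-annulus containing $z_0$ and a long piece of $\alpha$, which also makes the resulting points interior.

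To pass from one symmetric periodic orbit to infinitely many, I would argue by contradiction: if there were only finitely many symmetric periodic orbits of $f$, with periods dividing some integer $M$, then for $N$ coprime to $M$ the map $f^{nN}$ still fixes $z_0$, so the previous step yields a symmetric periodic orbit whose period divides $nN$; varying $N$ (and, if needed, the spoke and the power of $f$) one produces a symmetric periodic point of period not dividing $M$, a contradiction. Equivalently, and in line with the remark that a non-symmetric periodic point already suffices, one checks that a non-symmetric periodic orbit $\mathcal O$ — so that $\mathcal O$ and $R(\mathcal O)$ are distinct periodic orbits forming a non-trivial $R$-equivariant configuration — feeds the intersection argument with unboundedly large return times $k$, producing infinitely many distinct interior symmetric orbits.

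The technical heart is the Brouwer–translation bookkeeping — turning ``$F$ has a fixed point'' into ``some iterate of a spoke re-meets a spoke'' when $\Fix F$ is an arbitrary compact subset of $U_+\cup U_-$, in particular handling the reduction to disks and the possibility that the iterated spokes accumulate on an $F$-invariant set not directly controlled by $\Fix F$ — together with the quantitative refinement needed to harvest infinitely many distinct interior symmetric periodic orbits rather than a single one; carrying everything over to the non-compact annulus $\mathring A$ is a further delicate point.
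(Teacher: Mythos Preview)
Your first step --- producing one interior symmetric periodic point by iterating a spoke $\alpha\subset\Fix R$ under $F=f^{n}$ and using area preservation to force some $F^{k}(\alpha)$ to re-meet $\Fix R$ --- is essentially the paper's Proposition~\ref{prop:per pt=>int sym per pt}, though the paper executes it more concretely (tracking the region $A_-$ bounded by $Y_+$ and $f(Y_+)$, and distinguishing the cases $Y_+\cap f(Y_+)\subset\partial A$ nonempty versus empty). One caveat: your aside about ``essential arcs of $\Fix F$'' and cutting along them is misguided, since $F$ is not an involution and $\Fix F$ has no reason to be a $1$-manifold; fortunately that digression is not needed for the argument.

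The genuine gap is in the passage to infinitely many. Running the existence step with $f^{nN}$ for $N$ coprime to $M$ produces \emph{some} symmetric periodic point of $f$, but you have no control over its $f$-period: nothing prevents the very same symmetric orbit from being output for every $N$, so the claimed contradiction ``period not dividing $M$'' does not follow. Your alternative, feeding in a non-symmetric orbit to get ``unboundedly large return times $k$'', is equally unjustified --- large $k$ for $F$ says nothing about the $f$-period of the resulting point. (Your final paragraph effectively concedes all this.) The paper's device here is different and decisive: assuming the interior symmetric periodic points are finite, \emph{delete them} to obtain a punctured annulus $M$, and replace $f$ by an iterate $g=f^{2n!}$ fixing each puncture. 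Then rerun the spoke argument inside $M$: the punctures pin every $g^{m}(Y_1)$ to one half of the annulus, while area preservation forces these arcs to sweep out unbounded area --- a direct contradiction, with no period bookkeeping at all. This puncturing trick is the missing idea, and it works unchanged on $\mathring A$ (you do not need, and should not attempt, to pass to a compact sub-annulus, which $f$ will not preserve).
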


It is worth emphasizing that  even a {\em non-symmetric} periodic point also guarantees infinitely many interior symmetric periodic points. Although the theorem is stated for $A$ and $\mathring A$, it carries over to the half-closed annuli $\{z\in\R^2\,|\,1\leq|z|<2\}$ and $\{z\in\R^2\,|\,1<|z|\leq2\}$. We will show in Proposition \ref{prop:symmetric fixed point} that every area preserving reversible map on $D$ or $\mathring D$ has at least one interior symmetric fixed point. The following corollary is a direct consequence of these results.

\begin{Cor}\label{cor:dichotomy}
Every area preserving reversible map on $D$ or $\mathring D$ has either precisely one interior symmetric fixed point with no other periodic points or infinitely many interior symmetric periodic points. 
\end{Cor}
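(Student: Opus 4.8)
The plan is to deduce the corollary by combining Proposition~\ref{prop:symmetric fixed point} with Theorem~\ref{thm:Franks-like theorem}, via the classical device of puncturing the disk at a symmetric fixed point to turn it into an annulus. Let $f$ be an area preserving reversible map on $D$ (resp.\ $\mathring D$). Proposition~\ref{prop:symmetric fixed point} supplies an interior symmetric fixed point $z_0\in\mathring D$; note $z_0\in\Fix I_\Omega$, and that since any symmetric fixed point is in particular a periodic point, $z_0$ is automatically the \emph{unique} interior symmetric fixed point as soon as $f$ has no periodic point besides $z_0$. So the first task is the case split: either $f$ has no periodic point other than $z_0$ — and then $f$ lies in the first alternative — or $f$ has a periodic point $z_1\neq z_0$, which is the case I now address. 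The two alternatives are visibly mutually exclusive, so establishing this dichotomy proves the corollary.

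Because $z_0$ is an interior point fixed by both $f$ and $I_\Omega$, the punctured domain $\Omega^{*}:=D\setminus\{z_0\}$ (resp.\ $\mathring D\setminus\{z_0\}$) is connected, invariant under $I_\Omega$, and is carried to itself by $f$; thus $f$ restricts to a reversible area preserving homeomorphism of $\Omega^{*}$ still having $z_1$ among its periodic points. Topologically $\Omega^{*}$ is the half-closed annulus $\{z\in\R^2\,|\,1\le|z|<2\}$ (resp.\ the open annulus $\mathring A$). I would then fix a homeomorphism $\phi$ from $\Omega^{*}$ onto this annulus that conjugates $I_{\Omega^{*}}$ to the standard reflection and pushes the Lebesgue measure forward to a constant multiple of the Lebesgue measure; then $g:=\phi\circ f|_{\Omega^{*}}\circ\phi^{-1}$ is an area preserving reversible map on $\{z\in\R^2\,|\,1\le|z|<2\}$ (resp.\ $\mathring A$) possessing the periodic point $\phi(z_1)$. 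By Theorem~\ref{thm:Franks-like theorem} — in the half-closed, resp.\ open, annulus version recorded right after its statement — $g$ is therefore not periodic point free, hence has infinitely many interior symmetric periodic points; transporting these back through $\phi^{-1}$, which sends interior points of the annulus into $\mathring D\setminus\{z_0\}$ and, being reflection-equivariant, preserves the two defining relations of a symmetric periodic point, yields infinitely many interior symmetric periodic points of $f$ in $\mathring D$. Thus $f$ lies in the second alternative.

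The step I expect to be the genuine obstacle is the construction of $\phi$ that is \emph{simultaneously} equivariant for the two reflections and (rescaled-)measure preserving: this is an equivariant version of the Oxtoby–Ulam theorem and takes some care in matching the fixed arcs of the two reflections. An alternative route that sidesteps it is to note that the (elementary) proof of Theorem~\ref{thm:Franks-like theorem} uses the invariant area only through a Poincaré-recurrence mechanism, so it applies to any reversible homeomorphism preserving a finite Borel measure of full support; then $\phi$ need only be chosen reflection-equivariant — which is elementary to write down — and $f|_{\Omega^{*}}$ transported by such a $\phi$ preserves a finite full-support measure. Either way, once the annulus picture is in place the conclusion is immediate, so the only real work in the corollary is the topological identification of the punctured disk with an annulus together with this measure-straightening point.
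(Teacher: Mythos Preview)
Your proposal is correct and is precisely the deduction the paper has in mind: the paper gives no proof beyond declaring the corollary a ``direct consequence'' of Theorem~\ref{thm:Franks-like theorem} and Proposition~\ref{prop:symmetric fixed point}, and the puncture-at-the-symmetric-fixed-point trick is the intended way to pass from the disk to the annulus. Your observation that the proof of Theorem~\ref{thm:Franks-like theorem} needs only a finite $f$- and $I$-invariant Borel measure of full support (so that a merely reflection-equivariant identification suffices) is the cleanest way to dispose of the measure-straightening issue you raise.
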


The following theorem improves Theorem \ref{thm:Franks-like theorem} for reversible maps isotopic to the identity under an additional assumption on the parity of periods. Odd-periodic points have an intimate relation with centrally symmetric periodic orbits as described in the next subsection. 

\begin{Thm}\label{thm:dichotomy2}
Any area preserving reversible map on $A$ or $\mathring A$ isotopic to the identity with an odd-periodic point (not necessarily symmetric nor interior) has   infinitely many interior symmetric odd-periodic points. In consequence, every area preserving orientation preserving reversible map on $D$ or $\mathring D$ has either precisely one interior symmetric fixed point with no other odd-periodic points or infinitely many interior symmetric odd-periodic points. 
\end{Thm}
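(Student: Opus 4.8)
The plan is to reduce the two claims of Theorem~\ref{thm:dichotomy2} to a single statement about lifts of reversible maps. First I would lift everything to the universal cover: identify $\mathring A$ with $\R/\Z\times(0,1)$ (or a closed version) and pull back to the strip $\wt A=\R\times(0,1)$, choosing the lift $\wt I$ of the reflection $I_{\mathring A}$ that fixes the line $\{x=0\}$; since $f$ is isotopic to the identity it has a lift $\wt f$ commuting with the deck transformation $T(x,y)=(x+1,y)$, and the reversibility relation $f\circ I=I\circ f^{-1}$ lifts to $\wt f\circ\wt I=\wt I\circ\wt f^{-1}\circ T^m$ for some integer $m$; replacing $\wt f$ by $T^{-m}\wt f$ we may assume the clean relation $\wt f\circ\wt I=\wt I\circ\wt f^{-1}$. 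The key algebraic observation is the involution trick for reversible maps: $\wt I$ and $\wt f\circ\wt I$ are each involutions (the latter because $(\wt f\wt I)^2=\wt f(\wt I\wt f\wt I)=\wt f\wt f^{-1}=\id$), so $\wt f=(\wt f\wt I)\circ\wt I$ is a composition of two involutions, and a point is a symmetric fixed point of $f$ precisely when its orbit under this pair of involutions is governed by $\Fix\wt I\cap T^k(\Fix(\wt f\wt I))$-type intersections.

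Next I would feed the hypothesis — an odd-periodic point $z$ of period $2j+1$, possibly non-symmetric and possibly on the boundary — into this picture. The strategy is to show this forces the rotation-number interval of $\wt f$ (or rather of an appropriate odd iterate) to contain a point of the form $p/(2q+1)$ with $p$ odd, or more directly to produce a Birkhoff-type intersection: an odd-periodic point of period $n=2j+1$ with rotation vector $p/n$ gives, after passing to $\wt f^n T^{-p}$, a map with a fixed point, and one wants to combine this with the boundary behaviour. For the \emph{half-closed/open annulus} one does not have a priori twist at both ends, so the real content is that the existence of \emph{one} periodic orbit already pins down enough of the dynamics. Here I would invoke, or reprove in this elementary reversible setting, the mechanism behind Theorem~\ref{thm:Franks-like theorem}: once there is a periodic point, the reversible structure produces symmetric periodic points, and the parity bookkeeping (keeping track of whether $k$ and $\ell$ and the covering translation exponents are even or odd) upgrades ``symmetric periodic'' to ``symmetric odd-periodic.'' Concretely, a symmetric periodic point satisfies $\wt f^k=T^a$ and $\wt f^\ell=\wt I T^b$ on a lift of $z$; eliminating $\wt I$ via a second application shows $\wt f^{2\ell-k}$ or a similar combination is a translation, and chasing parities shows one can extract an \emph{odd} period. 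Iterating the Franks-type argument (or its reversible analogue from the proof of Theorem~\ref{thm:Franks-like theorem}) at infinitely many rational rotation numbers of odd denominator yields the infinitely many interior symmetric odd-periodic points.

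For the second assertion, on $D$ or $\mathring D$, I would first apply Proposition~\ref{prop:symmetric fixed point} to get at least one interior symmetric fixed point $z_0$. If $f$ has no other odd-periodic point we are in the first alternative (uniqueness of $z_0$ as a periodic point of odd period, in particular as a fixed point, follows since any second fixed point would be an odd-periodic point). Otherwise $f$ has an odd-periodic point $z\neq z_0$; blow up (or simply excise a small invariant disc around $z_0$, or pass to $D\setminus\{z_0\}$) to convert the disc into a half-open annulus $\{0<|w|\le 1\}$ on which $f$ is still area preserving, reversible, orientation preserving, isotopic to the identity, and still has the odd-periodic point $z$ — here orientation preservation of $f$ is exactly what guarantees the isotopy-to-identity hypothesis needed for the annulus statement, and it is also what makes $\Fix I_{\mathring D}$ an arc through $z_0$ rather than something that could collapse. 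Then the first part of the theorem applied to this annulus gives infinitely many interior symmetric odd-periodic points of $f$ in $D\setminus\{z_0\}$, hence in $D$, completing the dichotomy.

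The main obstacle I anticipate is the bookkeeping that turns a merely \emph{symmetric periodic} conclusion into a \emph{symmetric odd-periodic} one while only assuming an odd-periodic (possibly non-symmetric, possibly boundary) point as input: one must track the parities of all the exponents ($k$, $\ell$, and the integer translation exponents coming from the covering) simultaneously through the two-involutions decomposition, and ensure that the infinitude argument can be run at infinitely many rotation numbers with \emph{odd} denominators rather than merely infinitely many rationals. A secondary subtlety is handling the case where the given odd-periodic point sits on the boundary circle, where $\Fix\wt I$ and the boundary interact; I expect this to be dealt with by a small-perturbation or limiting argument, or by noting that a boundary periodic point still constrains the rotation number at that end.
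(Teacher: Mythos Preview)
Your reduction for the disc case is fine and matches the paper: get a symmetric fixed point $z_0$ by Proposition~\ref{prop:symmetric fixed point}, delete it, and apply the annulus statement on $D\setminus\{z_0\}$.

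The gap is in the annulus statement itself. You correctly lift to the strip and record the two-involutions decomposition $f=(f\circ I)\circ I$, but you then try to push the problem through rotation numbers and a Franks/Poincar\'e--Birkhoff mechanism at rationals with odd denominator, and you yourself flag that the parity bookkeeping is the unresolved obstacle. In fact this bookkeeping is the whole point, and your outline never supplies it: nothing in your scheme explains why the Franks-type recurrence produces \emph{odd} periods rather than arbitrary ones.

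The paper sidesteps this entirely with one observation you have in your hands but do not use. Instead of taking the separating arcs to be the components $Y_\pm$ of $\Fix I_A$ (as in the proof of Theorem~\ref{thm:Franks-like theorem}), take them to be the components of $\Fix(f\circ I_A)$, which is again a pair of arcs by Lemma~\ref{lem:fixd loci of involutions} since $f$ is isotopic to the identity. By Lemma~\ref{lem:fix loci},
\[
f^m\bigl(\Fix(f\circ I_A)\bigr)=\Fix\bigl(f^{2m+1}\circ I_A\bigr),
\]
so by Lemma~\ref{lem:intersection point=>symmetric periodic point} every point of
\[
f^m\bigl(\Fix(f\circ I_A)\bigr)\cap \Fix I_A
\]
is a symmetric $(2m+1)$-periodic point. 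Thus the oddness of the period is automatic, with no parity chase at all. With this single substitution, the area/separation argument of Proposition~\ref{prop:per pt=>int sym per pt} gives one interior symmetric odd-periodic point, and the covering argument of Theorem~\ref{thm:infinitely many odd} (iterating images of a subarc $s\subset\Fix(G\circ I_S)$ and using area to force intersections with the lifts of $\Fix I_A$) gives infinitely many. Your rotation-number route, even if it could be made to work, is strictly harder than this.
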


In view of the Poincar\'e-Birkhoff theorem, we expect that there exists a symmetric fixed point on $A$ under a boundary twist condition. Recall that a homeomorphism $f:A\to A$ isotopic to the identity is said to satisfy the {\bf boundary twist condition} if $f$ rotates two boundary circles of $A$ in opposite angular directions. To be precise, there is a lift $F=(F_1,F_2):S\to S$  of $f$ such that 
$$
F_1(x,0)<x<F_1(x,1),\quad  x\in\R.\\[1.5ex]
$$

\begin{Thm}\label{thm:PB theorem}
Let $f$ be a reversible map on $A$  isotopic to the identity with the boundary twist condition. Then there is an interior  symmetric fixed point of $f$ on each connected component of $\Fix I_A$. 
\end{Thm}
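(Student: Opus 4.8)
The plan is to use the classical description of a reversible map as a product of two involutions, together with an intermediate value argument along the symmetry lines, in the spirit of De Vogelaere and Devaney.

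First I would fix the universal covering $\pi\colon S\to A$, with deck group generated by $T(x,y)=(x+1,y)$, normalised so that the two boundary circles of $A$ are covered by the lines $\{y=0\}$ and $\{y=1\}$ and so that $I_A$ lifts to the reflection $\iota(x,y)=(-x,y)$; then $\iota T\iota=T^{-1}$, and since $f$ is isotopic to the identity its chosen lift $F=(F_1,F_2)$ commutes with $T$, preserves each line $\{y=j\}$, and restricts there to an increasing homeomorphism of $\R$. For $n\in\Z$ put $\rho_n:=T^n\iota$, so that $\rho_n(x,y)=(n-x,y)$ is an involution of $S$ covering $I_A$ whose fixed set is the spanning arc $\{x=n/2\}$; as $n$ varies these arcs cover the two connected components of $\Fix I_A$, the even and the odd values of $n$ giving the two components. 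It therefore suffices to produce, for $n=0$ and $n=1$, an interior fixed point of $f$ lying on the component covered by $\{x=n/2\}$.

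The next step is a reduction to a crossing of symmetry lines. Reversibility gives $I_A f I_A=f^{-1}$, hence $\tau:=f\circ I_A$ satisfies $\tau^2=\id$. Fix $n$ and set $\rho_{n,1}:=F\circ\rho_n$, a lift of $\tau$; then $\rho_{n,1}^2$ is a lift of $\id_A$, so $\rho_{n,1}^2=T^m$ for some $m$. On $\{y=0\}$ one has $\rho_{n,1}(x,0)=(F_1(n-x,0),0)$, which as a function of $x$ is an orientation reversing homeomorphism of $\R$ (the reflection $x\mapsto n-x$ followed by the increasing map $u\mapsto F_1(u,0)$) and thus has a fixed point; since $\rho_{n,1}^2$ fixes it, $m=0$, so $\rho_{n,1}$ is an involution of $S$. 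From $\rho_{n,1}=F\circ\rho_n$ and $\rho_n^2=\id$ we get $F=\rho_{n,1}\circ\rho_n$, so any $P\in\Fix\rho_{n,1}\cap\Fix\rho_n$ satisfies $F(P)=\rho_{n,1}(\rho_n(P))=\rho_{n,1}(P)=P$; hence $\pi(P)$ is a fixed point of $f$ on $\Fix I_A$, i.e.\ a symmetric fixed point, on the component covered by $\{x=n/2\}$. It is interior, since the endpoints $(n/2,0),(n/2,1)$ of $\{x=n/2\}$ are not fixed by $\rho_{n,1}$: the boundary twist condition gives $F_1(n/2,0)<n/2<F_1(n/2,1)$. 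So everything reduces to showing that $\Fix\rho_{n,1}$ meets the arc $\{x=n/2\}$.

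This is where the twist is used. On $\{y=0\}$ the map $\rho_{n,1}$ is orientation reversing, hence has a \emph{unique} fixed point; solving $F_1(n-x,0)=x$ and using $F_1(u,0)<u$ shows it has $x<n/2$, and likewise the unique fixed point of $\rho_{n,1}$ on $\{y=1\}$ has $x>n/2$ by $F_1(u,1)>u$. Thus $\Fix\rho_{n,1}$ contains a point strictly to the left of the separating arc $\{x=n/2\}$ and a point strictly to its right, so any connected subset of $\Fix\rho_{n,1}$ joining the two must cross $\{x=n/2\}$. The one ingredient that is not elementary — and the main obstacle — is precisely that $\Fix\rho_{n,1}$ does contain such a connected joining set; this follows from the classical structure theorem of Brouwer and Ker\'ekj\'art\'o, by which the fixed set of the orientation reversing involution $\rho_{n,1}$ is a topological $1$-submanifold of $S$ with boundary contained in $\partial S$. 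Given this, $\Fix\rho_{n,1}$ is a disjoint union of arcs and circles whose only boundary points are the two points found on $\{y=0\}$ and $\{y=1\}$; hence one of its arcs runs between the two boundary lines and must cross $\{x=n/2\}$, which completes the proof.
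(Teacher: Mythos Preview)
Your argument is correct and is essentially the paper's first proof (``Sketch of Proof 1''): lift to the strip, use that $F\circ\rho_n$ is an orientation reversing involution whose fixed set is a spanning arc (Brouwer--Ker\'ekj\'art\'o, stated in the paper as Lemma~\ref{lem:fixd loci of involutions}), locate its two boundary endpoints on opposite sides of $\{x=n/2\}$ via the twist, and conclude that the arc crosses $\Fix\rho_n$; the two components of $\Fix I_A$ are handled by two lifts of $I_A$, your $\rho_0,\rho_1$ playing the role of the paper's $I_S,I_S'$. Your write-up is in fact more careful than the paper's sketch --- you verify explicitly that $\rho_{n,1}$ is an involution and that its boundary fixed points sit at $x<n/2$ and $x>n/2$ --- so nothing is missing.
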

We point out that here and in the next theorem a reversible map is not even required to have no wandering point, which is a weaker condition than being area-preserving, see \cite{Fra88}. In \cite{Bir15}, Birkhoff already showed the existence of infinitely many symmetric periodic orbits and studied their rotation numbers. There was no precise statement like Theorem \ref{thm:PB theorem} but presumably he knew that this is true. We will outline two proofs of Theorem \ref{thm:PB theorem} which are on the lines of \cite{Bir13,Bir15}.

In fact, we find a necessary and sufficient condition on the existence of a symmetric fixed point of a reversible map isotopic to the identity on an arbitrary invariant connected domain $\Om$ and in particular, the boundary twist condition on $A$ implies the sufficient condition. Note that the fixed locus of $I_\Om$ is composed of disjoint intervals $Y_i$, i.e.  $\Fix I_\Om=\bigsqcup_{1\leq i\leq n} Y_i$.

\begin{Thm}\label{thm:necessary and sufficient condition}
Let $f:\Om\to\Om$ be a reversible map isotopic to the identity. Then $f$ has a  symmetric fixed point on $Y_i$ if and only if $f(Y_i)\cap Y_i\neq\emptyset$ for $i\in\{1,\dots,n\}$ Similarly, the existence of an interior symmetric fixed point of $f$ on $Y_i$ is equivalent to $f(Y_i)\cap Y_i\cap\mathring \Om\neq\emptyset$.  
\end{Thm}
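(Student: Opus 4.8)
The plan is to prove Theorem~\ref{thm:necessary and sufficient condition} by reducing the problem to a one-dimensional statement on the interval $Y_i$, using the fact that $f$ is reversible and isotopic to the identity. First I would set up coordinates: since $\Fix I_\Om = \bigsqcup_i Y_i$ and each $Y_i$ is a segment contained in the $y$-axis (the fixed locus of $I$), I would parametrize $Y_i$ by a real interval and consider the first coordinate of $f$ along $Y_i$. The necessity direction is immediate and essentially definitional: if $z \in Y_i$ is a symmetric fixed point then $f(z) = z \in Y_i$, so $f(Y_i) \cap Y_i \neq \emptyset$ (and $f(z)=z \in \mathring\Om$ in the interior case). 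The content is in the sufficiency direction.

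For sufficiency, the key observation is the following consequence of reversibility: if $w = f(z)$ for some $z \in Y_i \subset \Fix I_\Om$, then $I_\Om(w) = I_\Om(f(z)) = f^{-1}(I_\Om(z)) = f^{-1}(z)$, so $z = f(I_\Om(w))$, i.e. $z = f(I_\Om(f(z)))$. More usefully, a point $p$ is a symmetric fixed point precisely when $p = I_\Om(p)$ and $f(p) = p$; I would instead exploit that symmetric fixed points on $Y_i$ correspond to points $z \in Y_i$ with $f(z) \in Y_i$ together with a ``halfway'' argument. Concretely, suppose $f(Y_i) \cap Y_i \ne \emptyset$, say $f(a) \in Y_i$ for some $a \in Y_i$. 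Using that $f$ is isotopic to the identity one can choose a lift/branch of $f$ near $Y_i$ so that the displacement of $f$ measured in the direction transverse to $Y_i$ (the $x$-direction, which $I_\Om$ reverses) is a well-defined continuous function. I would then consider the continuous function $g: Y_i \to \R$ recording the signed $x$-displacement of $f$, i.e. roughly $g(z) = \langle f(z) - z, e_1\rangle$ after suitable identification; reversibility forces a sign/symmetry constraint that will produce a zero of $g$, and a zero of $g$ combined with $f(a) \in Y_i$ pins down an honest symmetric fixed point. The cleanest route is probably to look at the composition $f$ restricted to the curve $Y_i$ and apply an intermediate value argument to the function $z \mapsto$ (coordinate of $f(z)$ along $Y_i$) $-$ (coordinate of $z$), after first arguing that reversibility makes the $x$-components match up so that $f(z) \in Y_i$ can be upgraded to $f(z) = z$ for a suitable $z$.

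I expect the main obstacle to be the bookkeeping needed to make ``displacement along $Y_i$'' a globally well-defined continuous real-valued function on the possibly non-compact interval $Y_i$, and to handle the boundary behavior (distinguishing the interior statement from the closed one). Since $\Om$ need not be the annulus and $f$ is only assumed isotopic to the identity (not area-preserving, not even nonwandering), I cannot use rotation numbers or Poincar\'e--Birkhoff directly; the isotopy-to-identity hypothesis must instead be used to rule out $f$ ``flipping'' a neighborhood of $Y_i$, which would otherwise let $f(Y_i)$ meet $Y_i$ without a fixed point. I would make this precise by noting that $I_\Om$ is orientation-reversing and $f$ is orientation-preserving, so $f$ maps each side of $Y_i$ (locally) to a consistent side, and then the intermediate value theorem applied along $Y_i$ to the transverse displacement yields the symmetric fixed point; requiring the meeting point to lie in $\mathring\Om$ simply restricts the IVT to the interior of $Y_i$. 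Finally I would remark that on $A$ the boundary twist condition forces $f(Y_i)$ to meet $Y_i$ for each component $Y_i$ of $\Fix I_A$, recovering Theorem~\ref{thm:PB theorem} as the stated special case.
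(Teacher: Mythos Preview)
Your necessity direction is fine, and you correctly start the sufficiency direction: from $a\in Y_i$ with $b:=f(a)\in Y_i$, reversibility gives
\[
f(b)=f(I_\Om b)=I_\Om f^{-1}(b)=I_\Om(a)=a,
\]
so $\{a,b\}$ is a symmetric $2$-cycle lying on $Y_i$. This is exactly the first step in the paper's proof.

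The gap is in the next step, where you propose an intermediate value argument. Neither of your two IVT candidates works. The transverse displacement $g(z)=\langle f(z)-z,e_1\rangle$ already vanishes at $a$ by hypothesis, so there is nothing to gain from IVT; more importantly, a zero of $g$ only says $f(z)\in\Fix I_\Om$, which by the computation above yields $f^2(z)=z$, \emph{not} $f(z)=z$. Your second candidate, the tangential displacement ``$y$-coordinate of $f(z)$ minus $y$-coordinate of $z$'' along $Y_i$, is not the right object either: $f$ does not carry $Y_i$ into $Y_i$, so ``coordinate of $f(z)$ along $Y_i$'' is undefined for generic $z\in[a,b]$, and even if you use the ambient $y$-coordinate, a zero only says $f(z)$ lies on the horizontal line through $z$, which again is not a fixed point. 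The orientation observation (that $f$ preserves the two sides of $Y_i$) does not rescue this: it is consistent with $f$ having no fixed point on $[a,b]$ while the image arc $f([a,b])$ bulges off $Y_i$ and returns.

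What is actually needed---and what the paper does---is a genuinely two-dimensional fixed point argument to pass from the $2$-cycle $\{a,b\}\subset Y_i$ to a fixed point on $[a,b]$. The paper invokes Proposition~\ref{period 2 real periodic point=>real fixed point} (an adaptation of M.~Brown's argument): one encloses $[a,b]$ by an $I_\Om$-symmetric loop $\gamma*(I_\Om\circ\bar\gamma)$, computes that the winding number $i(\gamma,f\circ\gamma)\in\tfrac12+\Z$ because $f$ swaps the endpoints $a,b$, and then uses the isotopy $f_s$ from $\Id$ to $f$ to show the two halves of the loop contribute equally, giving total index in $1+2\Z$. This odd index forces a fixed point inside the region, hence on $[a,b]\subset\Fix I_\Om$. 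The isotopy-to-identity hypothesis enters precisely here, in the homotopy invariance of the index; your sketch uses it only to control orientations, which is not enough.
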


In general it is not easy to find an analogue of the boundary twist condition for an arbitrary possibly non-closed planar domain, see \cite{Fra88} for the case of the open annulus.  However in view of this theorem, a reversible map $f:\Om\to\Om$ is reasonable to be called a  {\bf twist map} if $\Fix I_\Om$ is twisted, explicitly, if some component of $\Fix I_\Om$ has a self-intersection point by $f$. 

Apart from these results, we will also show some classical fixed point theorems for symmetric fixed points of reversible maps and some of them will play key roles in the proof of our main results.

The Poincar\'e-Birkhoff theorem has been proved and generalized in many papers \cite{Bir13,Bir15,Bir25,BN77,Neu77,Car82,Din83,Fra88,YZ95,Gui97,LCW09,HMS12,KS13,PR13}. Franks' theorem also has been pursued further in \cite{FH03,LeC06,CKRTZ12,Ker12}. There has been a remarkable new approach  using pseudoholomorphic curves to understand smooth dynamics on planar domains by Bramham and Hofer \cite{BH12} (see also the literature cited therein). We treat the Poincare-Birkhoff theorem and Franks' theorem in their original forms for brevity but we expect that such generalized works might have reversible counterparts as well.

\subsection{Applications to symmetric periodic orbits}\quad\\[-1.5ex]

Let $M$ be a 3-dimensional smooth manifold with a smooth vector field $X$. An embedded compact Riemann surface $\Sigma\into M$ with boundary is called a {\bf global surface of section} for $X$ if $(\Sigma,X)$ meets the following requirements.\\[-2ex]
\begin{itemize}
\item[(1)] The boundary of $\Sigma$ consists of periodic orbits, called the {\em spanning orbits}.
\item[(2)] The  vector field $X$ is transversal to the interior $\mathring\Sigma$ of $\Sigma$.
\item[(3)] Every orbit of $X$, except the spanning orbits, passes through $\mathring\Sigma$ in forward and backward time.\\[-2ex]
\end{itemize}

When we study dynamical systems with two degrees of freedom, the existence of global surfaces of section reduces the complexity by one dimension. A notion of global surfaces of section was introduced by Poincar\'e \cite{Poi99} and further studied by Birkhoff \cite{Bir17}. In the spirit of Poincar\'e's groundbreaking idea, our results help us to find symmetric periodic orbits of reversible dynamical systems with global surfaces of section invariant under symmetries. 
If $M$ carries a smooth involution $\rho$ with $\rho_*X=-X$, a global surface of section $\Sigma$ invariant under $\rho$, i.e. $\rho(\Sigma)=\Sigma$, is called an {\bf invariant global surface of section}. Note that the unique spanning orbit $\gamma:\R/T\Z\to M$ of an invariant global disk-like surface of section is automatically {\bf symmetric}, i.e. $\rho(\gamma(\R/T\Z))=\gamma(\R/T\Z)$, or equivalently if $\rho(\gamma(t))=\gamma(-t)$, $t\in \R/T\Z$ up to reparametrization.   The condition $\rho^*X=-X$ yields that the induced Poincar\'e return map $f:\mathring\Sigma\to\mathring\Sigma$ is reversible with respect to the involution $\rho|_\Sigma$. A crucial observation is that there is the following one-to-one correspondence:
$$
\left\{\begin{array}{ll}\textrm{geometrically distinct sym-}\\ \textrm{metric periodic orbits of $X$}\end{array}\right\}\setminus
\left\{\begin{array}{ll}\textrm{spanning}\\ \textrm{orbits of $\Sigma$}\end{array}\right\}
 \longleftrightarrow 
\left\{\begin{array}{ll}\textrm{$f$-orbits of symmetric}\\ \textrm{periodic points of $f$}\end{array}\right\}.
$$
Accordingly, reversible dynamical systems with invariant global surfaces of section, for instance the planar restricted three-body problem or the St\"ormer problem, are of interest to us.
As observed in \cite{Bir15,Con63,McG69}, there are invariant global  surfaces of section in the restricted three-body problem sometimes and the Poincar\'e return maps are area preserving. Although they were not interested in whether their global surfaces of section are invariant, one can easily check that these are indeed invariant. Another example is the St\"ormer problem, see  \cite{Sto07,Bra70}. The global surface of section in the St\"ormer problem constructed in \cite{DeV54} is also invariant.

More generally, Frauenfelder and the author \cite{FK14}  prove that if a hypersurface in $\R^4$ bounds a strictly convex domain and is invariant under the involution $\rho:=\mathrm{diag(-1,-1,1,1)}$, there exists an invariant global disk-like surface of section for the standard Reeb vector field and  the Poincar\'e return map is area preserving, which generalizes a pioneering work of Hofer, Zehnder, and Wysocki \cite{HWZ98,HWZ03}.  This shows that a  global disk-like surface of section in  \cite{AFFHvK12} is also invariant. As a consequence of Corollary \ref{cor:dichotomy}, there are either two or infinitely many symmetric periodic orbits on such a hypersurface and the latter happens under the presence of a non-symmetric periodic orbit.
\\[-1.5ex]

Suppose that a hypersurface $M\subset\R^4$ is centrally symmetric, i.e. $-\Id_{\R^4}(M)=M$. The standard Reeb vector field $X$ on $M$ automatically satisfies $(-\Id_{\R^4})^*X=X$. A periodic orbit which is symmetric with respect to $-\Id_{\R^4}$ is called {\bf centrally symmetric}. A centrally symmetric hypersurface $M$ which is symmetric with respect to $\rho$ as well is also interesting as the regularized energy hypersurface of the planar restricted three-body problem is so, see \cite{AFFHvK12}. If a periodic orbit is symmetric with respect to $\rho$ as well as $-\Id_{\R^4}$, it is called  {\bf doubly symmetric}. The geometric motion of doubly symmetric periodic orbits and that of $\rho$-symmetric centrally non-symmetric periodic orbits are considerably different, see \cite{Kan12}. In this situation, if $\Sigma$ is a $\rho$-invariant global surface of section,  so is $-\Id_{\R^4}(\Sigma)$. In consequence, the Poincar\'e (half-return) map $f:\mathring\Sigma\to -\Id_{\R^4}(\mathring\Sigma)=\mathring\Sigma$ (the identification by $-\Id_{\R^4}$) encodes the qualitative properties of the doubly symmetric aspect of $X$, and in particular we have the following correspondence.
$$
\left\{\begin{array}{ll}\textrm{geometrically distinct doubly}\\ \textrm{symmetric periodic orbits of $X$}\end{array}\right\}\setminus\left\{\begin{array}{ll}\textrm{spanning}\\ \textrm{orbits of $\Sigma$}\end{array}\right\}
 \longleftrightarrow 
\left\{\begin{array}{ll}\textrm{$f$-orbits of symmetric}\\ \textrm{odd-periodic points of $f$}\end{array}\right\}.
$$
In view of this correspondence, Theorem \ref{thm:dichotomy2} helps us to detect doubly symmetric periodic orbits. We expect that a hypersurface in $\R^4$ bounding a strictly convex domain and symmetric with respect to both $\rho$ and $-\Id_{\R^4}$ has a $\rho$-invariant global disk-like surface of section with the doubly symmetric spanning orbit.\\[-1.5ex]

We can apply this to symmetric closed geodesics on Riemannian 2-spheres. A combination of Franks' work  \cite{Fra92} and Bangert's work \cite{Ban93} (see also \cite{Hin93}) proves the existence of infinitely many closed geodesics on every Riemannian 2-sphere. In particular Franks' work applies to a Riemannian 2-sphere with Birkhoff's global annulus-like surface of section. Let $\gamma:S^1\to S^2$ be a simple closed geodesic with respect to a Riemannian metric $g$. It separates $S^2$ into two disks and we denote one of them by $D_1$. Then 
$$
\Sigma_\gamma:=\{(x,v)\in S_gS^2\,|\, x\in\gamma(S^1),\; 0\leq \angle(\dot\gamma_x,v)\leq\pi\}
$$
is an embedded surface in the unit tangent bundle $S_gS^2$ diffeomorphic to the closed annulus with boundary $\dot\gamma(S^1)\sqcup\dot{\bar\gamma}(S^1)$ where $\bar\gamma(t):=\gamma(-t)$. Birkhoff showed in \cite{Bir27} that $\Sigma_\gamma$ is a global surface of section and the Poincar\'e return map on it extends to the boundary of $\Sigma_\gamma$ if $\gamma$ meets certain properties, which are true for  $(S^2,g)$ with positive curvature. Moreover the extended Poincar\'e return map on ${\Sigma}_\gamma$ is area preserving and isotopic to the identity.

There is a refinement of this in the symmetric case as follows. Suppose that there exists an involutive orientation reversing isometry $R$ on $(S^2,g)$, i.e. $R^2=\Id_{S^2}$, $\Fix R\cong S^1$, and $R^*g=g$. We call a closed geodesic $\gamma$ is {\bf symmetric} if $R(\gamma(t))=\gamma(-t)$.
Assume that  there is a simple symmetric closed geodesic $\gamma$  which defines
 a global surface of section $\Sigma_\gamma$. Then $\Sigma_\gamma$ is invariant under the involution $R_*$. Hence the following corollary is an immediate consequence of Theorem \ref{thm:Franks-like theorem} and Theorem \ref{thm:PB theorem}, see the proof of \cite[Theorem 4.1]{Fra92} for details.

\begin{Cor}\label{cor:geodesic}
There are infinitely many symmetric closed geodesics on a symmetric $(S^2,g,R)$ equipped with invariant Birkhoff's global annulus-like surface of section.
\end{Cor}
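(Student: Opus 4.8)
The plan is to deduce Corollary \ref{cor:geodesic} from Theorem \ref{thm:Franks-like theorem} and Theorem \ref{thm:PB theorem} by transporting the symmetric dynamics of $(S^2,g,R)$ to the invariant annulus $\Sigma_\gamma$, exactly as in \cite[Theorem 4.1]{Fra92} but keeping track of the reflection symmetry throughout. First I would verify that the geodesic flow on $S_gS^2$ carries a natural involution: the orientation reversing isometry $R$ induces a diffeomorphism $R_*:S_gS^2\to S_gS^2$ on the unit tangent bundle, and because $R$ reverses orientation, $R_*$ reverses the direction of the geodesic flow, i.e. $(R_*)^*X=-X$ where $X$ is the geodesic vector field. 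Since $\gamma$ is a symmetric simple closed geodesic, $R(\gamma(S^1))=\gamma(S^1)$ and one checks directly from the definition of $\Sigma_\gamma$ that $R_*(\Sigma_\gamma)=\Sigma_\gamma$, so $\Sigma_\gamma$ together with $R_*|_{\Sigma_\gamma}$ is an invariant global surface of section in the sense of the previous subsection. Identifying $\Sigma_\gamma$ with the closed annulus $A$, the involution $R_*|_{\Sigma_\gamma}$ becomes a reflection $I_A$ (after an isotopy adjusting the identification, using that any smooth involution of the annulus with one-dimensional fixed locus is conjugate to $I_A$), and the extended Poincar\'e return map $f:\Sigma_\gamma\to\Sigma_\gamma$ becomes an area preserving reversible map on $A$ isotopic to the identity.

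Next I would feed this map into the two theorems. By Birkhoff's analysis in \cite{Bir27}, for $(S^2,g)$ with the relevant properties the return map $f$ rotates the two boundary circles of $\Sigma_\gamma$ in opposite directions, so $f$ satisfies the boundary twist condition; Theorem \ref{thm:PB theorem} then produces an interior symmetric fixed point of $f$, hence in particular a periodic point. With a periodic point in hand, Theorem \ref{thm:Franks-like theorem} applies and yields infinitely many interior symmetric periodic points of $f$. It remains to translate these back to geodesics: an interior point of $\Sigma_\gamma$ corresponds to a geodesic genuinely distinct from $\gamma$, and the one-to-one correspondence recorded in the previous subsection between $f$-orbits of symmetric periodic points of $f$ and geometrically distinct symmetric periodic orbits of $X$ (other than the spanning orbit $\dot\gamma$, which here is symmetric since $\gamma$ is) converts infinitely many symmetric periodic points into infinitely many geometrically distinct symmetric closed geodesics, each corresponding to a symmetric periodic orbit of the geodesic flow. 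A minor point to address is that distinct $f$-orbits could in principle project to the same geodesic traversed multiply, but the usual argument (a closed geodesic of minimal period gives rise to only finitely many periodic points of each period, while infinitely many symmetric periodic points must have unbounded period) rules this out, so one indeed obtains infinitely many distinct symmetric closed geodesics.

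The main obstacle I anticipate is not any of the dynamical input — that is handled wholesale by the two theorems — but the bookkeeping that makes the symmetry match up: one must check carefully that the identification of $\Sigma_\gamma$ with $A$ can be chosen to intertwine $R_*|_{\Sigma_\gamma}$ with the standard reflection $I_A$, and simultaneously that the extended return map remains isotopic to the identity and area preserving (for an appropriate choice of area form) after this identification. Once one observes that $R_*$ is an orientation reversing smooth involution of $\Sigma_\gamma$ whose fixed point set is the pair of arcs consisting of vectors based at the two fixed points of $R$ on $\gamma$ (there are two such since $\Fix R\cong S^1$ meets the simple closed curve $\gamma(S^1)$ transversally in two points, assuming genericity, or more carefully one argues with the actual intersection pattern), the classification of involutions of the annulus gives the required conjugation, and the remaining verifications are routine. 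For the precise form of Birkhoff's surface of section and the boundary behavior I would simply cite \cite{Bir27} and the discussion in \cite[Theorem 4.1]{Fra92}, as the statement of the corollary already presupposes that $\Sigma_\gamma$ is a bona fide global annulus-like surface of section.
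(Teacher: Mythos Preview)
Your proposal is correct and matches the paper's approach: the paper gives no proof beyond declaring the corollary ``an immediate consequence of Theorem \ref{thm:Franks-like theorem} and Theorem \ref{thm:PB theorem}, see the proof of \cite[Theorem 4.1]{Fra92} for details,'' and your plan is precisely to run Franks' argument with the reversible versions of the two theorems substituted in, after verifying that $R_*$ makes the Birkhoff annulus into an invariant surface of section with reversible return map. The one point worth tightening is your assertion that the return map satisfies the boundary twist condition outright; in Franks' argument what is actually used is that the boundary rotation numbers differ (from Birkhoff's monotone twist property), so that for suitable $p/q$ some lift of $f^q$ satisfies the twist hypothesis---but since you already defer to \cite{Bir27} and \cite[Theorem 4.1]{Fra92} for exactly this boundary analysis, this is a cosmetic imprecision rather than a gap.
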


\subsection{Questions}\quad\\[-1.5ex]

We expect that an analogue of Theorem \ref{thm:dichotomy2} in the absence of reversibility is also true. To be precise, we expect that every area preserving homeomorphism $f$ on $A$ or $\mathring A$ isotopic to the identity has infinitely many odd-periodic points  provided a single odd-periodic point. More generally it is conceivable that the following claim is true. Assume that an area preserving homeomorphism $f$ on $A$ or $\mathring A$ isotopic to the identity has a $k$-periodic point for some $k\in\N$. For a given $n\in\N$ if $n$ and $k$ are relatively prime, $f$ has infinitely many periodic orbits with period relatively prime to $n$. This is motivated by the case $S^3$ with the standard $\Z/n$-action. Note that this covers the centrally symmetric case when $n=2$.

It is also tempting to remove the assumption on the existence of Birkhoff's global surface of section in Corollary \ref{cor:geodesic} by showing a reversible counterpart to a result in \cite{Ban93}.

%%%%%%%%%%%%%%%%%%%%%%%%%%%%%%%%%%%%%%%%%%%%%%%%%%%%%%%%%%%

\section{Reversible maps and involutions}

A remarkable property of reversible maps is that $f\circ I_\Omega$ is an involution if $f$ is a reversible map on an invariant domain $\Omega$. Symmetric fixed points of a reversible maps $f$ can be interpreted as intersection points of two fixed loci $\Fix f$ and $\Fix I_\Omega$. Therefore to study a symmetric fixed point of $f$, it is crucial to observe the fixed locus $\Fix (f\circ I_\Omega)$ of the involution $f\circ I_\Omega$ since 
$$
\Fix f\cap \Fix I_\Omega=\Fix (f\circ I_\Omega)\cap \Fix I_\Omega.
$$
\begin{Rmk}
We define an operation $\star$ on the space $\mathfrak{Inv}$ of involutions on $\Omega$ by
$$
J\star K=J\circ K\circ J,\quad J,\,K\in\mathfrak{Inv},
$$
so that $(\mathfrak{Inv},\star)$ becomes a magma, i.e. $J\star K\in\mathfrak{Inv}$. We also consider the space $\mathfrak{Rev}$ of reversible maps on $\Omega$ and endow $\mathfrak{Rev}$ with a magma operation by
$$
f\diamond g=f\circ g^{-1}\circ f,\quad f,\,g\in\mathfrak{Rev}.
$$
Then we have a bijective magma homomorphism between them:
$$
(\mathfrak{Rev},\diamond)\to(\mathfrak{Inv},\star),\quad f\mapsto f\circ I_\Omega.
$$
In particular, any reversible map $f$ is a composition of two involutions $f\circ I_\Omega$ and $I_\Omega$ as observed by Birkhoff \cite{Bir15}.
\end{Rmk}

Recall that a map $f:\Omega\to\Omega$ is said to be {\bf conjugate} to a map $g:\Omega\to\Omega$ if there exists a homeomorphism $h:\Omega\to\Omega$ with $h\circ f=g\circ h$. If this is the case, 
$$
h(\Fix f)=\Fix g.
$$
According to Brouwer \cite{Brou19}, any involution on $\R^2$ is conjugate to $\Id$ or $-\Id$ or $I$. Consequently, the fixed locus of an orientation reversing involution on $\R^2$ is an embedded line and the fixed locus of an involution on a planar domain is a topological submanifold. The following lemma immediately follows from this observation.

\begin{Lemma}\label{lem:fixd loci of involutions}
If $J$ is an orientation reversing involution on $\Omega$, $\Fix J$ is a topological submanifold. If $\Omega=\R^2$, $\Fix J\cong \R$. If $\Omega =D$, $\Fix J\cong [0,1]$.  If $\Omega=S$ and $J$ is isotopic to $I_S$, $\Fix J\cong[0,1]$. If $\Omega=A$ and $J$ is isotopic to $I_A$, $\Fix J\cong[0,1]\sqcup[0,1]$.
\end{Lemma}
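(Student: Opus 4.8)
The plan is to reduce every case to Brouwer's classification of involutions of the plane, recalled above, applied either globally or locally. The assertion that $\Fix J$ is a topological submanifold is just the local form of that classification: near an interior fixed point $J$ is conjugate to a linear orientation reversing involution of $\R^2$, hence to the reflection $I$, so $\Fix J$ is there an embedded line, and near a boundary fixed point the analogous half-plane model exhibits $\Fix J$ as a half-open arc with endpoint on $\p\Omega$. Since an orientation reversing involution cannot fix an open set, the dimension is always $1$, so $\Fix J$ is a $1$-manifold with $\p(\Fix J)=\Fix J\cap\p\Omega$. For $\Omega=\R^2$ the global form gives a homeomorphism $h$ with $h\circ J\circ h^{-1}=I$, whence $\Fix J=h^{-1}(\{0\}\times\R)\cong\R$.

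For $\Omega=D$ I would argue as follows. As a homeomorphism of $D$, the map $J$ preserves $\p D\cong S^1$, and $J|_{\p D}$ is an orientation reversing involution of the circle, so it has exactly two fixed points; moreover $J$ restricts to an orientation reversing involution of $\mathring D\cong\R^2$, so by the previous case $\Fix J\cap\mathring D\cong\R$. Thus $\Fix J$ is a compact $1$-manifold with boundary whose boundary is exactly these two points and whose interior is a single line; the classification of compact $1$-manifolds then forces $\Fix J\cong[0,1]$, since any circle component would have to lie inside $\Fix J\cap\mathring D\cong\R$. The case $\Omega=S$ runs in parallel: the hypothesis that $J$ be isotopic to $I_S$ ensures that $J$ preserves each of the two boundary lines of $S$, on each of which it restricts to an orientation reversing involution of $\R$ with exactly one fixed point, while on $\mathring S\cong\R^2$ its fixed set is again a single line; hence $\Fix J$ is a connected $1$-manifold with exactly two boundary points, i.e. $\Fix J\cong[0,1]$.

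For $\Omega=A$ I would deduce the statement from the disk case by a capping-off construction. Being isotopic to $I_A$, the map $J$ preserves each boundary circle; put $\phi:=J|_{\{|z|=1\}}$, an orientation reversing involution of the circle. Conjugating $\phi$ to the standard reflection and coning off yields an orientation reversing involution $\psi$ of the disk $\{|z|\le1\}$ with $\psi|_{\{|z|=1\}}=\phi$ and $\Fix\psi\cong[0,1]$. Gluing, $\wh J:=J\cup\psi$ is an orientation reversing involution of $\{|z|\le2\}\cong D$, so by the disk case $\Fix\wh J\cong[0,1]$ is an arc with both endpoints on $\{|z|=2\}$. It contains $\Fix\psi$ as a subarc whose two endpoints lie on $\{|z|=1\}$, hence in the interior of $\Fix\wh J$; removing the corresponding open subarc leaves precisely $\Fix J=\Fix\wh J\cap A$, which is therefore a disjoint union of two arcs, i.e. $\Fix J\cong[0,1]\sqcup[0,1]$.

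The routine ingredients are the local and global forms of Brouwer's theorem, the fixed-point counts for orientation reversing involutions of $S^1$ and $\R$, and the classification of $1$-manifolds. The step needing genuine care is the bookkeeping that combines the behaviour of $\Fix J$ in the interior of the domain with its behaviour on the boundary so as to pin down the homeomorphism type; in the annulus case this includes checking that the isotopy hypothesis really does forbid $J$ from interchanging the two boundary circles (without it $\Fix J$ could be a single arc crossing $A$, or empty, or the core circle). A less self-contained alternative in the annulus case would be to quote the classification of finite-order homeomorphisms of the annulus, but reducing to the disk keeps everything within Brouwer's theorem.
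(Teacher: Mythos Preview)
Your argument is correct and rests on the same single input the paper uses, namely Brouwer's classification of planar involutions; the paper in fact gives no proof beyond the sentence ``The following lemma immediately follows from this observation,'' so your case-by-case analysis (boundary fixed-point counts plus the interior line, and the capping-off trick for $A$) is a legitimate fleshing-out of what the paper leaves implicit rather than a different route. The only place you go beyond a mechanical unpacking is the annulus case, where your cone-and-glue reduction to the disk is a clean way to avoid quoting a separate classification of annulus involutions; this is not in the paper but is entirely in its spirit.
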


\begin{Rmk}\label{rmk:Lefschetz fixed point theorem}
In general it is not easy to answer whether an involution has a fixed point. There is a Lefschetz fixed point theorem for involutions due to \cite{KK68}. This states that  if an involution $J$ on a locally compact space $X$ is fixed point free,
$$
\Lambda_J:=\sum_{k\geq0}(-1)^k\mathrm{Tr} (J_*|H_k(X;\R))=0.
$$
Even if $\Fix J$ is nonempty, it is not necessarily a topological submanifold whereas the fixed locus of a smooth involution is always a smooth submanifold. There is a so-called {\em dogbone space} $B$ due to Bing which is not a topological manifold but $B\x\R$ is homeomorphic to $\R^4$. Using this, one can construct a continuous involution $J$ on $\R^4$ with $\Fix J=B$.
We refer the reader to  \cite[Section 9]{Bin59} for details.
\end{Rmk}

\begin{Rmk}
The condition $J\simeq I_A$ when $\Omega=A$ in the preceding lemma is indispensable. Note that an antisymplectic involution $J$ on $A$ defined by
$$
(r,\theta)\mapsto (3-r,\pi+\theta)
$$
is fixed point free where $(r,\theta)\in\R^2$ is the polar coordinate. In particular, the Lefschetz number $\Lambda_{J}$ in Remark \ref{rmk:Lefschetz fixed point theorem} vanishes.
\end{Rmk}

The following two easy lemmas will play key roles. Note that any iteration $f^i$, $i\in\Z$ of a reversible map $f$ is reversible again.
\begin{Lemma}\label{lem:fix loci}
If $f:\Omega\to\Omega$ is reversible, 
$$
f^j(\Fix (f^i\circ I_\Omega))=\Fix(f^{2j+i}\circ I_\Omega), \quad\forall i,\, j\in\Z.
$$
\end{Lemma}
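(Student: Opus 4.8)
The plan is to reduce everything to a single algebraic identity describing how the integer iterates of a reversible map interact with the reflection. First I would record that reversibility propagates to all iterates in the following precise form: starting from $f\circ I_\Om=I_\Om\circ f^{-1}$, an easy induction (base cases $i=0$ trivial and $i=1$ the definition, with positive and negative exponents treated separately) gives
$$
f^i\circ I_\Om=I_\Om\circ f^{-i},\qquad \forall\, i\in\Z .
$$
Equivalently $I_\Om\circ f^i=f^{-i}\circ I_\Om$, and conjugating by $I_\Om$ shows $I_\Om\circ f^i\circ I_\Om=f^{-i}$; in particular each $f^i\circ I_\Om$ is an involution, in accordance with the magma homomorphism of the Remark above. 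This identity is the only nontrivial input.

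Next I would unwind the two membership conditions in terms of this identity. A point $z$ lies in $\Fix(f^i\circ I_\Om)$ precisely when $I_\Om(z)=f^{-i}(z)$. Given such a $z$ and any $j\in\Z$, the displayed identity yields $I_\Om(f^j(z))=f^{-j}(I_\Om(z))=f^{-j}(f^{-i}(z))=f^{-i-j}(z)$, hence $(f^{2j+i}\circ I_\Om)(f^j(z))=f^{2j+i}(f^{-i-j}(z))=f^j(z)$. Thus $f^j(z)\in\Fix(f^{2j+i}\circ I_\Om)$, which proves the inclusion $f^j(\Fix(f^i\circ I_\Om))\subseteq\Fix(f^{2j+i}\circ I_\Om)$.

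Finally, for the reverse inclusion I would apply the inclusion just established with $i$ replaced by $2j+i$ and $j$ replaced by $-j$, getting $f^{-j}(\Fix(f^{2j+i}\circ I_\Om))\subseteq\Fix(f^i\circ I_\Om)$; applying the homeomorphism $f^j$ to both sides gives the opposite containment, and together they give the claimed equality. There is no genuine obstacle: the only point requiring care is the sign bookkeeping of the exponents and checking that the induction in the first step also covers negative $i$; once the identity $f^i\circ I_\Om=I_\Om\circ f^{-i}$ is in hand, the rest is direct substitution.
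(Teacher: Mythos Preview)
Your proof is correct and follows essentially the same approach as the paper: both establish the forward inclusion by the direct computation $f^{2j+i}\circ I_\Om(f^j(z))=f^j(z)$ using the reversibility relation $I_\Om\circ f^j=f^{-j}\circ I_\Om$. The only cosmetic difference is that for the reverse inclusion the paper redoes the computation explicitly for $f^{-j}(z)$, whereas you obtain it more economically by reapplying the already-proved forward inclusion with $(i,j)$ replaced by $(2j+i,-j)$.
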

\begin{proof}
For $z\in\Fix (f^i\circ I_\Omega)$, $f^j(z)\in\Fix(f^{2j+i}\circ I_\Omega)$ since
$$
f^{2j+i}\circ I_\Omega(f^j(z))=f^{2j+i}\circ f^{-j}(I_\Om(z))=f^j(f^i\circ I_\Omega(z))=f^j(z).
$$
To prove the converse we pick $z\in\Fix(f^{2j+i}\circ I_\Omega)$. Then,
$$
f^i\circ I_\Omega(f^{-j}(z))=I_\Omega\circ f^{-i-j}(f^{2j+i}\circ I_\Omega(z))=I_\Omega\circ f^j\circ I_\Omega(z)=f^{-j}(z).
$$
This implies $f^{-j}(z)\in\Fix(f^i\circ I_\Omega)$ and thus $z\in f^j(\Fix f^i\circ I_\Omega)$.
\end{proof}

\begin{Lemma}\label{lem:intersection point=>symmetric periodic point}
Let $f:\Omega\to\Omega$ be a reversible map. If $z\in\Fix (f^k\circ I_\Omega)\cap\Fix(f^\ell\circ I_\Omega)$ for some $k>\ell\in\Z$, then $z$ is a symmetric $(k-\ell)$-periodic point of $f$.
\end{Lemma}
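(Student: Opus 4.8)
The plan is to turn both hypotheses into identities for $I_\Omega(z)$ and simply compare them. From $z\in\Fix(f^k\circ I_\Omega)$ we have $f^k(I_\Omega(z))=z$, i.e. $I_\Omega(z)=f^{-k}(z)$; likewise $z\in\Fix(f^\ell\circ I_\Omega)$ gives $I_\Omega(z)=f^{-\ell}(z)$. Equating the two and applying $f^k$ yields $z=f^{k-\ell}(z)$, so $z$ is a $(k-\ell)$-periodic point of $f$, with $k-\ell\geq 1$ because $k>\ell$. (We do not claim $k-\ell$ is the minimal period, only that $f^{k-\ell}(z)=z$.)

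It remains to put the symmetry into the form demanded by the definition, namely to produce $m\in\N$ with $f^m(z)=I_\Omega(z)$. The integer identity $f^{-\ell}(z)=I_\Omega(z)$ does this already, but $-\ell$ need not be a natural number; here the periodicity just established is used to push the exponent into $\N$: since $f^{k-\ell}(z)=z$, choosing any $N\in\N$ with $N(k-\ell)-\ell\geq 1$ gives $f^{N(k-\ell)-\ell}(z)=f^{-\ell}(z)=I_\Omega(z)$. Thus both conditions in the definition of a symmetric periodic point hold with exponents in $\N$, and $z$ is a symmetric $(k-\ell)$-periodic point of $f$.

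I expect no genuine obstacle: the entire content is the cancellation $f^{-k}(z)=f^{-\ell}(z)$ coming from the two fixed-locus conditions. The only step that warrants a sentence of care is the last one, ensuring the exponent witnessing $f^m(z)=I_\Omega(z)$ can be taken in $\N$ rather than merely in $\Z$; this is immediate from the periodicity. One could alternatively phrase the whole argument through Lemma \ref{lem:fix loci} (reading $z\in\Fix(f^\ell\circ I_\Omega)$ and $z\in\Fix(f^k\circ I_\Omega)$ against the shift $i\mapsto 2j+i$), but the direct computation above is shorter and self-contained.
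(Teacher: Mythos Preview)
Your proof is correct and follows essentially the same route as the paper's: both arguments reduce the two fixed-locus conditions to $f^{-k}(z)=I_\Omega(z)=f^{-\ell}(z)$, deduce $f^{k-\ell}(z)=z$, and then use the periodicity to shift the exponent witnessing $f^m(z)=I_\Omega(z)$ into $\N$. A minor cosmetic difference is that the paper passes through $I_\Omega\circ f^{\ell-k}\circ I_\Omega=f^{k-\ell}$ (invoking reversibility), whereas your version reaches the same conclusion by a direct comparison that in fact never needs the reversibility hypothesis.
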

\begin{proof}
Since $f^k\circ I_\Omega(z)=f^\ell\circ I_\Omega(z)$, 
$$
z=I_\Omega\circ f^{\ell-k}\circ I_\Omega(z)=f^{k-\ell}(z).
$$
Moreover by the assumption $f^k\circ I_\Omega(z)=z$ and hence
$$
f^{m(k-\ell)-k}(z)=f^{-k}(z)=I_\Omega(z).
$$
for any $m\in\Z$.
\end{proof}

\section{On the fixed point index for reversible maps}

For any two smooth paths $\delta,\,\gamma:[0,1]\to\Omega$ with $\delta(t)\neq\gamma(t)$ for every $t\in[0,1]$, we define a path $\gamma-\delta:[0,1]\to\Omega$ avoiding the origin  by $(\gamma-\delta)(t):=\gamma(t)-\delta(t)$. The variation of angle of such two paths $\delta,\,\gamma$ is defined by
$$
i(\delta,\gamma):=\frac{1}{2\pi}\int_{\gamma-\delta} d\theta
$$
where $\theta$ is the angular coordinate on $\R^2$. We list some useful properties of $i(\delta,\gamma)$.

\begin{itemize}
\item[1)] For any $\delta,\,\gamma:[0,1]\to\Om$ with $\delta(t)\neq\gamma(t)$,
$$
i(\delta,\gamma)=i(\gamma,\delta).
$$ 
\item [2)] If $\delta_1,\,\delta_2,\,\gamma_1,\,\gamma_2:[0,1]\to\Omega$ with $\delta_j(t)\neq\gamma_j(t)$, $j=1,2$ satisfy $\delta_1(1)=\delta_2(0)$ and $\gamma_1(1)=\gamma_2(0)$,
$$
i(\delta_1*\delta_2,\gamma_1*\gamma_2)=i(\delta_1,\gamma_1)+i(\delta_2,\gamma_2)
$$
where $*$ stands for the catenation operation defined by 
$$
\gamma_1*\gamma_2(t):=\left\{\begin{array}{ll} \gamma_1(2t) & t\in[0,\frac{1}{2}],\\[.5ex]
\gamma_2(1-2t) & t\in[\frac{1}{2},1].
\end{array}\right.
$$
\item [2)] If $\delta,\,\gamma:[0,1]\to\Om$ with $\delta(t)\neq\gamma(t)$ are loops, i.e. $\delta(0)=\delta(1)$, $\gamma(0)=\gamma(1)$,
$$
i(\delta,\gamma)\in\Z.
$$
\item [3)] For any $\delta,\,\gamma:[0,1]\to\Om$ with $\delta(t)\neq\gamma(t)$,  
$$
i(\bar\delta,\bar\gamma)=-i(\delta,\gamma)
$$
where  $\bar\delta(t):=\delta(1-t)$, $\bar\gamma(t):=\gamma(1-t)$.
\item [4)] Recall that we have assumed that $\Omega$ is invariant.  For any $\delta,\,\gamma:[0,1]\to\Om$ with $\delta(t)\neq\gamma(t)$,  
$$
i(I_\Omega\circ\delta,I_\Omega\circ\gamma)=-i(\delta,\gamma).
$$
\item [5)] For continuous families of paths $\delta_s$,\,$\gamma_s:[0,1]\to\Om$, $s\in\R$ with $\delta_s(t)\neq\gamma_s(t)$ for any $s$, $i(\delta_s,\gamma_s)$ varies continuously with the parameter $s$.
\end{itemize}

\begin{Prop}\label{prop:mirror loop has the same index}
Let $f:\Omega\to\Omega$ be a reversible map isotopic to the identity. Then for any loop $\gamma:S^1\to\Omega\setminus\Fix f$,
$$
i(\gamma,f\circ\gamma)=i(I_\Om\circ\bar\gamma,f\circ I_\Om\circ\bar\gamma).
$$
Let $h:\Omega\to\Omega$ be a reversible map isotopic to $I_\Om$. Then for any loop $\gamma:S^1\to\Omega\setminus\Fix h$,
$$
i(\gamma,h\circ\gamma)=-i(I_\Om\circ\bar\gamma,h\circ I_\Om\circ\bar\gamma).
$$
\end{Prop}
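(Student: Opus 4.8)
The plan is to reduce everything to the behaviour of the variation-of-angle functional $i$ under the involution $I_\Om$, using property 4) from the list and the structure of reversible maps as compositions of involutions. First I would treat the case where $f$ is reversible and isotopic to the identity. Write $\sigma := I_\Om\circ\bar\gamma$, the ``mirror loop''. By property 4) we have $i(\gamma, f\circ\gamma) = -\,i(I_\Om\circ\gamma,\, I_\Om\circ f\circ\gamma)$. Now use the reversibility relation $f\circ I_\Om = I_\Om\circ f^{-1}$, which rearranges to $I_\Om\circ f = f^{-1}\circ I_\Om$; applying this to $I_\Om\circ f\circ\gamma = f^{-1}\circ I_\Om\circ\gamma$, so the right-hand side becomes $-\,i(I_\Om\circ\gamma,\, f^{-1}\circ I_\Om\circ\gamma)$. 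Next I would use the homotopy invariance (property 5)) together with the hypothesis that $f$ is isotopic to the identity: since $f\simeq\id$, so is $f^{-1}$, and one can homotope the pair $(I_\Om\circ\gamma,\, f^{-1}\circ I_\Om\circ\gamma)$ to $(f\circ I_\Om\circ\gamma,\, I_\Om\circ\gamma)$ staying within the locus where the two paths differ — the key point being that $f^{-1}\circ I_\Om\circ\gamma = I_\Om\circ\gamma$ would force a point of $\Fix f$, which is excluded along the whole homotopy because $\gamma$ avoids $\Fix f$ and $\Fix f = \Fix f^{-1}$ is preserved. Then symmetry of $i$ (property 1)) and the orientation-reversal identity (property 3), applied via $\bar\gamma$) convert $i(f\circ I_\Om\circ\gamma,\, I_\Om\circ\gamma)$ into $i(I_\Om\circ\bar\gamma,\, f\circ I_\Om\circ\bar\gamma)$, which is exactly the claimed right-hand side.

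For the second statement, where $h$ is reversible and isotopic to $I_\Om$, the argument runs in parallel but the extra involution contributes an additional sign. Here $h = (h\circ I_\Om)\circ I_\Om$ is a composition of the involution $h\circ I_\Om$ (see the Remark in Section 2) with $I_\Om$; equivalently $h$ itself behaves like an ``orientation-reversing'' factor from the point of view of $i$. Concretely, I would again start from property 4), $i(\gamma, h\circ\gamma) = -\,i(I_\Om\circ\gamma,\, I_\Om\circ h\circ\gamma)$, use the reversibility of $h$ to write $I_\Om\circ h = h^{-1}\circ I_\Om$, and then exploit that $h^{-1}\simeq I_\Om$ to homotope — this time the homotopy class of $h^{-1}$ being that of $I_\Om$ rather than $\id$ is what flips the final sign relative to the first case. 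The admissibility of the homotopy again rests on $\gamma$ avoiding $\Fix h$, so that $h^{-1}\circ I_\Om\circ\gamma$ never meets $I_\Om\circ\gamma$ along the deformation.

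The main obstacle I anticipate is making the homotopy step fully rigorous: one must produce an explicit isotopy $f_s$ from $f$ (or $h$) to the identity (resp.\ $I_\Om$) through reversible maps, or at least argue that the isotopy to the identity guaranteed by hypothesis can be chosen so that the associated family of path-pairs stays in the ``nonintersecting'' locus. The subtlety is that ``$f$ isotopic to the identity'' a priori gives an isotopy through homeomorphisms, not necessarily through reversible ones, and not necessarily avoiding $\Fix$ issues at intermediate times. I would handle this by noting that what actually enters $i$ is only the loop $f\circ\gamma$ and its homotopy class rel the complement of $\gamma$ in the relevant covering-space picture: since $\gamma$ is a fixed loop avoiding $\Fix f$, the difference path $f\circ\gamma - \gamma$ never hits the origin, and an isotopy $f_s\simeq\id$ deforms $f_s\circ\gamma$ without $f_s\circ\gamma$ crossing $\gamma$ precisely when no $f_s$ has a fixed point on $\gamma(S^1)$ — which can be arranged, or circumvented, by instead lifting to $S$ and comparing winding numbers of the lifts directly, as in the standard proof that the fixed-point index is a homotopy invariant. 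Everything else is bookkeeping with the five listed properties of $i$ and the algebraic identity $f\circ I_\Om = I_\Om\circ f^{-1}$.
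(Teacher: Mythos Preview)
Your approach is essentially the paper's: reduce via properties 3) and 4) and reversibility to comparing $i(\delta,f^{-1}\circ\delta)$ with $i(f\circ\delta,\delta)$ for a suitable loop $\delta$, then bridge these via an isotopy $f_s$. The algebra in your first paragraph is correct and matches the paper's computation (started from the other end).

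The ``obstacle'' you anticipate, however, is illusory, and your third paragraph over-complicates it. You do \emph{not} need an isotopy through reversible maps, nor any control on $\Fix f_s$. The point---which you almost state in paragraph one and then abandon---is that you apply $f_s$ to \emph{both} entries simultaneously: consider $s\mapsto i(f_s\circ\delta,\ f_s\circ f^{-1}\circ\delta)$ with $\delta=I_\Om\circ\gamma$. Since each $f_s$ is a homeomorphism, $f_s\circ\delta(t)\neq f_s\circ f^{-1}\circ\delta(t)$ is equivalent to $\delta(t)\neq f^{-1}\circ\delta(t)$, i.e.\ $\delta(t)\notin\Fix f$; this holds because $I_\Om(\Fix f)=\Fix f$ and $\gamma$ avoids $\Fix f$. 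So the family is admissible for all $s$, independently of where $\Fix f_s$ sits. Your worry about ``$f_s$ having a fixed point on $\gamma(S^1)$'' arises only if you try to homotope one entry while holding the other fixed, which is the wrong move.

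One small omission: after establishing continuity in $s$, you still need to invoke that $i$ is \emph{integer}-valued on loops (property 2) in the list) to conclude the family is constant; property 5) alone gives only continuity. The second case ($h\simeq I_\Om$) goes through identically, with the isotopy $h_s$ from $I_\Om$ to $h$ applied to both arguments; the extra sign comes from one additional use of property 4), exactly as you say.
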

\begin{proof}
Observe that by reversibility of $f$
\bean
i(I_\Om\circ\bar\gamma,f\circ I_\Om\circ\bar\gamma)&=i(I_\Om\circ\bar\gamma,I_\Om\circ f^{-1}\circ\bar\gamma)
=i(\gamma,f^{-1}\circ\gamma).
\eea
Let $f_s:\Omega\to\Omega$, $s\in[0,1]$ be an isotopy between $f_0=\Id_\Om$ and $f_1=f$. Since $i(f_s\circ\gamma,f_s\circ f^{-1}\circ\gamma)$ varies continuously in $s\in[0,1]$ and it takes an integer value for every $s\in[0,1]$, $i(f_s\circ\gamma,f_s\circ f^{-1}\circ\gamma)$ is constant. In particular,
$$
i(\gamma,f^{-1}\circ\gamma)=i(f\circ\gamma,\gamma),
$$
and the first assertion is proved. For the second identity, let $h_s:\Omega\to\Omega$, $s\in[0,1]$ be an isotopy between $h_0=I_\Om$ and $h_1=h$. In a similar vein, we compute
\bean
i(I_\Om\circ\bar\gamma,h\circ I_\Om\circ\bar\gamma)&=i(\gamma,h^{-1}\circ\gamma)
=-i(I_\Om\circ\gamma,I_\Om\circ h^{-1}\circ\gamma)=-i(h\circ\gamma,\gamma).
\eea
\end{proof}

Recall that the \textbf{index} of an isolated interior fixed point $z\in\Fix f$ is defined by
$$
i(f,z):=i(\gamma,f\circ\gamma)
$$
for any simple sufficiently small loop $\gamma:S^1\to\Omega\setminus\Fix f$ winding $z$ counterclockwise. We will also use the 
fact that for any simply connected domain $D_\delta$ enclosed by a simple loop $\delta:S^1\to\Omega\setminus\Fix f$ with finitely many fixed points of $f$, it holds that
$$
i(\delta,f\circ\delta)=\sum_{x\in\Fix f\cap D_\delta} i(f,x).
$$

\begin{Cor}\label{cor:index}
Let $f:\Omega\to\Omega$ be a reversible map. If $z$ is fixed by $f$, so is $I_\Omega(z)$. Moreover if $f$ is isotopic to the identity and $z\in\Fix f\cap \mathring\Om$, $i(f,z)=i(f,I_\Om(z))$. If $f$ and $I_\Om$ are isotopic, $i(f,z)=-i(f,I_\Om(z))$
\end{Cor}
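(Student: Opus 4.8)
The plan is to deduce all three assertions directly from the equivariance of $I_\Om$ together with Proposition \ref{prop:mirror loop has the same index}. First I would prove that $I_\Om(z) \in \Fix f$ whenever $z \in \Fix f$: since $f$ is reversible, $f \circ I_\Om = I_\Om \circ f^{-1}$, so $f(I_\Om(z)) = I_\Om(f^{-1}(z)) = I_\Om(z)$ using $f(z) = z$ (equivalently $f^{-1}(z) = z$). This is the routine part and takes one line.

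Next, for the index statement, assume $f$ is isotopic to the identity and $z \in \Fix f \cap \mathring\Om$ is an isolated interior fixed point (the index is only defined in that case). I would pick a small simple loop $\gamma:S^1\to\Omega\setminus\Fix f$ winding once counterclockwise around $z$, so that $i(f,z) = i(\gamma, f\circ\gamma)$. By Proposition \ref{prop:mirror loop has the same index},
$$
i(\gamma, f\circ\gamma) = i(I_\Om\circ\bar\gamma,\, f\circ I_\Om\circ\bar\gamma).
$$
Now $I_\Om \circ \bar\gamma$ is a simple loop; since $\gamma$ winds counterclockwise around $z$ and $I_\Om$ is orientation reversing while reversing $\gamma$ restores the orientation, $I_\Om\circ\bar\gamma$ winds once counterclockwise around $I_\Om(z)$, and it is small and avoids $\Fix f$ because $I_\Om(\Fix f) = \Fix f$ by the first part. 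Hence the right-hand side equals $i(f, I_\Om(z))$ by definition of the index, giving $i(f,z) = i(f, I_\Om(z))$. In the case where $f$ is isotopic to $I_\Om$, the second identity of Proposition \ref{prop:mirror loop has the same index} carries the same computation through with a sign flip, yielding $i(f,z) = -i(f, I_\Om(z))$.

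The only point that needs a little care — and the main (mild) obstacle — is the bookkeeping of orientations: one must check that applying $I_\Om$ and simultaneously reversing the parametrization $\gamma \mapsto I_\Om\circ\bar\gamma$ produces a loop that again winds \emph{counterclockwise} around the image point, so that the right-hand side of Proposition \ref{prop:mirror loop has the same index} is literally the defining expression for $i(f, I_\Om(z))$ rather than its negative. This follows because the orientation-reversal from $I_\Om$ is cancelled by the orientation-reversal from $t \mapsto 1-t$; alternatively one can verify it using property 3) and property 4) of $i(\delta,\gamma)$ listed above, namely $i(I_\Om\circ\bar\delta, I_\Om\circ\bar\gamma) = -i(I_\Om\circ\delta, I_\Om\circ\gamma) = i(\delta,\gamma)$, applied to $\delta = \gamma$, $\gamma$ replaced by $f\circ\gamma$ — but the cleanest route is simply to invoke Proposition \ref{prop:mirror loop has the same index} and observe that $I_\Om\circ\bar\gamma$ is an admissible small counterclockwise loop around $I_\Om(z)$. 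One should also remark that $I_\Om(z)$ is automatically an isolated interior fixed point when $z$ is, since $I_\Om$ is a homeomorphism fixing $\mathring\Om$, so that $i(f, I_\Om(z))$ is well defined.
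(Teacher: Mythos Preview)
Your proposal is correct and follows essentially the same argument as the paper: compute $f(I_\Om(z))=I_\Om(f^{-1}(z))=I_\Om(z)$ for the first claim, then choose a small counterclockwise loop $\gamma$ around $z$, invoke Proposition~\ref{prop:mirror loop has the same index}, and observe that $I_\Om\circ\bar\gamma$ is a small counterclockwise loop around $I_\Om(z)$ so that the resulting variation of angle is by definition $i(f,I_\Om(z))$. The orientation bookkeeping you flag is exactly the point the paper handles in one clause (``$I_\Om\circ\bar\gamma$ is a sufficiently small loop surrounding $I_\Om(z)$ counterclockwise''), so nothing is missing.
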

\begin{proof}
If $f(z)=z$, $f\circ I_\Om(z)=I_\Om\circ f^{-1}(z)=I_\Om(z)$. Assume that $f$ is isotopic to the identity. The case that $f$ is isotopic to $I_\Om$ follows in a similar way. Suppose that $\gamma:S^1\to\Omega\setminus\Fix f$ is a sufficiently small loop such that $i(f,z)=i(\gamma,f\circ\gamma)$ for $z\in\Fix f$. Then $I_\Om\circ\bar\gamma$ is a sufficiently small loop surrounding $I_\Om(x)$ counterclockwise, and hence using Proposition \ref{prop:mirror loop has the same index} we have
$$
i(f,I_\Om(z))=i(I_\Om\circ\bar\gamma,f\circ I_\Om\circ\bar\gamma)=i(\gamma,f\circ\gamma)=i(f,z).
$$
\end{proof}

\section{Classical fixed point theorems for reversible maps}
 The following statement can be thought as Brouwer's fixed point theorem for symmetric fixed points of reversible maps.  Note that any continuous map from $D$ to itself is isotopic to $\Id_{D}$ (to $I_D$) provided that it is orientation preserving (reversing).

\begin{Prop}
Let $f:D\to D$ be a reversible map with finitely many fixed points. Then $f$ has at least one symmetric fixed point.
\end{Prop}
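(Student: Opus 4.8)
The plan is to argue by contradiction and convert the absence of a symmetric fixed point into a parity contradiction for the sum of fixed point indices, exploiting that reversibility makes $f$ conjugate to $f^{-1}$ via $I_D$. So suppose $f$ has no symmetric fixed point. A symmetric fixed point is precisely a point of $\Fix f\cap\Fix I_D$, so this assumption says $\Fix f\cap\Fix I_D=\emptyset$; in particular $f$ fixes neither of the two endpoints $(0,\pm1)$ of the arc $\Fix I_D\cong[0,1]$. By Corollary \ref{cor:index} the finite set $\Fix f$ is invariant under $I_D$, and since it misses $\Fix I_D$ the involution $I_D$ acts on it without fixed points. Hence $\Fix f=\{z_1,I_D(z_1),\dots,z_m,I_D(z_m)\}$ breaks up into $I_D$-pairs and has even cardinality $2m$.

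Next I would evaluate $\sum_{z\in\Fix f}\ind(f,z)$ in two ways. On the one hand, the Lefschetz--Hopf fixed point theorem applied to $f:D\to D$ gives $\sum_{z\in\Fix f}\ind(f,z)=\mathrm{tr}\big(f_*\mid H_0(D;\R)\big)=1$, because $D$ is contractible. On the other hand, $f$ is isotopic to $\Id_D$ when it preserves orientation and to $I_D$ when it reverses orientation, so Corollary \ref{cor:index} applies to the (interior) fixed points: if $f$ preserves orientation then $\ind(f,I_D(z_j))=\ind(f,z_j)$ and each pair contributes the even number $2\ind(f,z_j)$; if $f$ reverses orientation then $\ind(f,I_D(z_j))=-\ind(f,z_j)$ and each pair contributes $0$. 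Either way $\sum_{z\in\Fix f}\ind(f,z)$ is even, which contradicts its being $1$. Consequently $f$ has a symmetric fixed point.

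I expect the only real work to be the index bookkeeping at fixed points on $\partial D$, since Corollary \ref{cor:index} and the index-sum formula $i(\delta,f\circ\delta)=\sum_x i(f,x)$ were stated only for interior fixed points. The clean way is to reduce to the case $\Fix f\cap\partial D=\emptyset$: if $f$ fixes one of $(0,\pm1)$ we are done immediately, and otherwise the boundary fixed points occur in $I_D$-pairs inside the two open semicircles and can be removed by a modification of $f$ in an $I_D$-invariant collar of $\partial D$ that is kept reversible and introduces no symmetric fixed point; afterwards $\partial D$ is an admissible simple loop with $i(\partial D,f\circ\partial D)=1$, all fixed points are interior, and the count above goes through word for word. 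Alternatively one may use a form of the Lefschetz--Hopf theorem that allows fixed points on the boundary together with the $I_D$-invariance of the boundary index (which follows from $I_D^{-1}\circ f\circ I_D=f^{-1}$ just as in Corollary \ref{cor:index}). The heart of the proof is the elementary parity observation in the previous paragraph.
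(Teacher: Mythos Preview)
Your argument is essentially the paper's own: assume no symmetric fixed point, pair the fixed points by $I_D$ using Corollary~\ref{cor:index} to force $\sum_{z\in\Fix f} i(f,z)\in 2\Z$, and contradict the Lefschetz--Hopf value $1$. You have simply unpacked the parity computation in more detail and flagged the boundary-fixed-point issue that the paper's terse proof passes over.
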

\begin{proof}
Suppose that there is no symmetric fixed point of $f$. Then due to Corollary \ref{cor:index}, 
$$
\sum_{z\in\Fix f}i(f,z)\in2\Z.
$$
This contradicts the Lefschetz-Hopf theorem, which implies
$\sum_{z\in\Fix f}i(f,z)=1$.
\end{proof}

\begin{Rmk}
There is a reversible map $f:D\to D$ with infinitely many fixed points but no symmetric fixed point.  For simplicity we consider the domain $B=\{(x,y)\in\R^2\,|\, -2\leq x\leq 2,\,0\leq y\leq 1\}$ instead of $D$. Let $f:B\to B$ be a reversible map given by 
$$
\left\{\begin{array}{cc} f(x,y)=(4+3x,y),&\quad y\in[-2,-1]\\[.5ex]
f(x,y)=\Big(\frac{4}{3}+\frac{1}{3}x,y), &y\in[-1,2]
\end{array}\right.
$$
Then $\Fix f=\{(-2,y)\cup(2,y)\,|\,0\leq y\leq 1\}$ but $f$ has no symmetric periodic point.
\end{Rmk}

The following is a version of Brouwer's lemma \cite{Brou12} for reversible maps.
\begin{Prop}
Let $f:\R^2\to\R^2$ be an orientation preserving reversible map with a symmetric periodic point. Then $f$ has a symmetric fixed point.
\end{Prop}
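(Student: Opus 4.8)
The plan is to reduce the existence of a symmetric \emph{fixed} point to the existence of a symmetric \emph{periodic} point by an averaging/midpoint trick at the level of the two involutions associated with $f$. Write $f=\sigma\circ I$ where $\sigma:=f\circ I$ is an orientation reversing involution on $\R^2$ (orientation reversing because $f$ preserves orientation and $I$ reverses it), and recall from Brouwer's classification that $\Fix\sigma$ and $\Fix I$ are each embedded lines in $\R^2$. By hypothesis there is a symmetric periodic point $z$: there are $k,\ell$ with $f^k(z)=z$ and $f^\ell(z)=I(z)$. By Lemma~\ref{lem:intersection point=>symmetric periodic point} (read in the other direction), this is the same as saying that some two of the lines $\Fix(f^i\circ I)$, $i\in\Z$, have a common point, and by Lemma~\ref{lem:fix loci} the family $\{\Fix(f^i\circ I)\}$ is permuted by the $f$-action via $i\mapsto i+2$. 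So a symmetric periodic point is exactly a point lying on two distinct curves from this family, while a symmetric fixed point is a point of $\Fix(f\circ I)\cap\Fix I=\Fix(f^1\circ I)\cap\Fix(f^0\circ I)$, i.e. a point on two \emph{consecutive} curves $\Fix I$ and $\Fix(f\circ I)$.

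The key step is therefore to show: if two curves $\Fix(f^a\circ I)$ and $\Fix(f^b\circ I)$ meet, then two consecutive ones meet. Here I would use that all these curves are embedded lines in the plane (orientation reversing involutions), together with a Jordan-curve/separation argument. The idea: a point $w\in\Fix(f^a\circ I)\cap\Fix(f^b\circ I)$ with $a>b$ is, by Lemma~\ref{lem:intersection point=>symmetric periodic point}, a symmetric $(a-b)$-periodic point with $f^{a-b}(w)=w$; consider its $f$-orbit $w,f(w),\dots,f^{a-b-1}(w)$. Each $f^j(w)$ lies on $\Fix(f^{2j+?}\circ I)$ for the appropriate shifted index by Lemma~\ref{lem:fix loci}, so the whole orbit is distributed among the curves in the family, and in particular hits the two curves $\Fix I$ and $\Fix(f\circ I)$ — here one uses that the indices $2j+b$ as $j$ ranges over $0,\dots,a-b-1$, reduced appropriately, realize every residue, so that some iterate of $w$ lies on $\Fix I$ and some (possibly different) iterate lies on $\Fix(f\circ I)$. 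Then one must still pass from "$\Fix I$ and $\Fix(f\circ I)$ are each nonempty and contain orbit points" to "they intersect." This is where orientation and the line structure enter: $\Fix I$ and $\Fix(f\circ I)$ are two properly embedded lines, $f$ maps $\Fix I$ to $\Fix(f^2\circ I)$, and one shows that if they were disjoint then the curves $\Fix(f^{2j}\circ I)$ would be pairwise disjoint and linearly ordered by the regions they bound, forcing $f$ to act as a fixed-point-free translation-like homeomorphism in a way incompatible with having the periodic orbit of $w$; this contradiction yields $\Fix I\cap\Fix(f\circ I)\neq\emptyset$, i.e. a symmetric fixed point.

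Concretely I would organize the proof as: (i) set $\sigma=f\circ I$, record that $\sigma^2=\id$ and $\sigma$ reverses orientation, so $\Fix\sigma\cong\R$; (ii) from the symmetric periodic point $z$ produce, via Lemmas~\ref{lem:fix loci} and \ref{lem:intersection point=>symmetric periodic point}, a point lying on $\Fix(f^i\circ I)\cap\Fix(f^j\circ I)$ for some $i\neq j$, and note the curves $C_i:=\Fix(f^i\circ I)$ satisfy $f(C_i)=C_{i+2}$ and are embedded lines; (iii) assume for contradiction $C_0\cap C_1=\emptyset$, hence by applying powers of $f$, $C_{2m}\cap C_{2m+1}=\emptyset$ for all $m$ and, since consecutive curves don't meet, no two of the $C_i$ meet at all — but that contradicts (ii). Step (iii) needs the planar topology input that two disjoint embedded lines, one of which is the image of the other under an orientation preserving homeomorphism, cannot be "linked" with a third, so that disjointness of consecutive curves propagates to disjointness of all of them. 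The main obstacle I expect is exactly this propagation: making rigorous that $C_0\cap C_1=\emptyset$ forces $C_i\cap C_j=\emptyset$ for all $i,j$. I would handle it by showing each $C_i$ separates $\R^2$ into two halves, that $f$ (being orientation preserving) respects a consistent co-orientation, and that $C_{i}$ and $C_{i+2}=f(C_i)$ then lie on prescribed sides of $C_{i+1}$, whence an induction gives a strictly monotone family of disjoint lines — contradicting that one of them, $C_i$ and $C_j$, had to coincide at a point. This reduces everything to the already-established Brouwer classification of planar involutions plus elementary Jordan-separation arguments, matching the paper's stated goal of an elementary approach.
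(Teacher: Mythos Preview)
The core of your proposal --- show that if $C_0\cap C_1=\emptyset$ (where $C_i:=\Fix(f^i\circ I)$) then all the $C_i$ are pairwise disjoint, contradicting step (ii) --- is sound in outline, but the propagation in (iii) has a real gap. From $C_0\cap C_1=\emptyset$ you do get all \emph{consecutive} pairs disjoint: apply $I$ (which sends $C_1$ to $C_{-1}$ and fixes $C_0$) to obtain $C_{-1}\cap C_0=\emptyset$, then apply powers of $f$. But ``consecutive pairs disjoint $\Rightarrow$ all pairs disjoint'' is false for general families of properly embedded lines in the plane; what you actually need is that $C_1$ \emph{separates} $C_0$ from $C_2$. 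Your stated reason, ``$f$ orientation preserving respects a consistent co-orientation,'' does not deliver this: orientation preservation tells you $f$ carries the two sides of $C_{-1}$ to the two sides of $C_1=f(C_{-1})$, but says nothing about which goes to which. The correct mechanism is the one you record in (i) and then never deploy: $\sigma=f\circ I$ is an involution with $\Fix\sigma=C_1$, so $\sigma$ \emph{swaps} the two sides of $C_1$; since $\sigma(C_0)=f(I(C_0))=f(C_0)=C_2$, this forces $C_2$ onto the side of $C_1$ opposite $C_0$, and now your induction runs. (Your earlier residue idea has a further defect: the orbit indices $2j+b$ all share the parity of $b$, so the orbit need not meet both $C_0$ and $C_1$; but you rightly abandon that route.)

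The paper's proof is exactly this side-swapping observation, used once rather than as the seed of an induction over the whole family $\{C_i\}$. Assuming $\alpha:=\Fix(f\circ I)$ misses $\Fix I$, it notes that $I(\alpha)$ lies in the opposite half-plane, computes via $f=\sigma\circ I$ that $f$ sends the side of $I(\alpha)$ containing $\Fix I$ onto the side $\alpha_+$ of $\alpha$ \emph{not} containing $\Fix I$, and since $\alpha_+\subset I(\alpha)_+$ concludes $f(\alpha_+)\subset\alpha_+$. A symmetric periodic orbit must visit $\alpha_+$ (at the point $I(z)$ when $z$ lies beyond $I(\alpha)$) yet also return to $\alpha_-$, an immediate contradiction. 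So the paper extracts the contradiction from a single forward-invariant half-plane --- essentially the base case of your monotone-family argument, used as the entire proof.
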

\begin{proof}
Let $(x,y)\in\R^2$ be a symmetric periodic point of $f$, i,e, $f^\ell(x,y)=I(x,y)=(-x,y)$ and $f^k(x,y)=(x,y)$ for some $\ell,\, k\in\N$. Since $f\circ I$ is an orientation reversing involution, $\Fix(f\circ I)$ is an embedded line in the plane due to Lemma \ref{lem:fixd loci of involutions}. We write $\alpha=\Fix(f\circ I)$. Assume by contradiction that $\alpha\cap\Fix I=\emptyset$, see Lemma \ref{lem:intersection point=>symmetric periodic point}. Note that $I(\alpha)$ is also an embedded line which does not intersect with both $\Fix I$ and $\alpha$. We assume that $I(\alpha)\subset \{(x,y)\in\R^2\,|\, x<0\}$. The case that $I(\alpha)\subset \{(x,y)\in\R^2\,|\, x>0\}$ also follows in a similar way.  The line $\alpha$ (resp. $I(\alpha)$) divides the plane into two open regions $\alpha_+$ and $\alpha_-$ (resp. $I(\alpha)_+$ and $I(\alpha)_-$). Let $\alpha_-$ and $I(\alpha)_+$ be regions containing the imaginary axis $\Fix I $.  Since $f$ is orientation preserving and $f(I(\alpha))=\alpha$, $f$ maps $I(\alpha)_\pm$ to $\alpha_\pm$ respectively. Now we show that $(x,y)$ cannot be a symmetric periodic point. If $(x,y)\in I(\alpha)_-$, $f^n(x,y)\in\alpha_-$ for all $n\in\N$ but $f^\ell(x,y)=(-x,y)\in\alpha_+$ for some $\ell\in\N$. Thus this does not happen and othere cases $(x,y)\in I(\alpha)\cup I(\alpha)_+$ can be ruled out in a similar way. This contradiction proves the proposition.
\end{proof}

The above proposition does not remain true for general invariant domains. However we still be able to find a symmetric fixed point in the presence of a certain symmetric 2-periodic point. Note that if $(0,y)\in\Fix I_\Om$ is a symmetric 2-periodic point of a reversible map $f:\Om\to\Om$, $f(0,y)\in\Fix I_\Om$ as well. Borrowing an idea of M. Brown \cite{Bro90}, if a symmetric 2-periodic points $(0,y),\,f(0,y)\in\Fix I_\Om$ lie on the same connected component of $\Fix I_\Om$, then we can find a symmetric fixed point of a reversible map $f$ isotopic to the identity.  This will be used later to prove Theorem \ref{thm:necessary and sufficient condition}. 

\begin{Prop}\label{period 2 real periodic point=>real fixed point}
Suppose that a  reversible map $f:\Om\to\Om$ isotopic to the identity has a symmetric 2-periodic point $(0,y)\in\Fix I_\Om$. If $(0,y)$ and $f(0,y)$ are on the same connected component of $\Fix I_\Om$, there exists a symmetric fixed point of $f$ lying on $[(0,y),f(0,y)]\subset\Fix I_\Om$.
\end{Prop}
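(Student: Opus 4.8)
The plan is to reduce the statement to a single orientation obstruction for the involution $g:=f\circ I_\Om$, which is orientation-reversing precisely because $f$ is isotopic to the identity.

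First I would normalise the setup. Any fixed point of $f$ lying on $\Fix I_\Om$ is automatically a symmetric fixed point (take $k=\ell=1$ in the definition), so it is enough to find a fixed point of $f$ on the subarc $\gamma:=[(0,y),f(0,y)]$ of $\Fix I_\Om$. Write $p:=(0,y)$ and $q:=f(p)$; since $p$ is a symmetric $2$-periodic point lying on $\Fix I_\Om$ one has $f^2(p)=p$ and $f(p)\neq p$, hence $q\neq p$, $f(q)=p$, and $q\in\Fix I_\Om$ as already observed before the statement. By hypothesis $p$ and $q$ lie on the same component $Y$ of $\Fix I_\Om=\Om\cap\{x=0\}$, which is an interval, so $\gamma\subseteq Y$ is a well-defined embedded arc from $p$ to $q$. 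Now assume, for a contradiction, that $f$ has no fixed point on $\gamma$. Since $\gamma\subseteq\Fix I_\Om$ we have $g|_\gamma=f|_\gamma$, so $g(p)=q$, $g(q)=p$, and $g$ has no fixed point on $\gamma$; and, $g$ being an involution, $g$ has no fixed point on $g(\gamma)$ either (if $z=g(w)$ with $g(z)=z$ then $w=g(z)=z\in\gamma$).

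The heart of the argument, which is where Brown's idea \cite{Bro90} enters, is to extract from $\gamma\cup g(\gamma)$ a $g$-invariant Jordan curve on whose boundary $g$ acts by preserving orientation. If $g(\gamma)=\gamma$ as sets, then $g|_\gamma$ is a continuous involution of the arc $\gamma$ interchanging its two endpoints, hence has a fixed point by the intermediate value theorem — a contradiction. In general, an innermost-arc argument in the spirit of \cite{Bro90} yields a subarc $\gamma'\subseteq\gamma$ with endpoints $a,b$ such that $g(a)=b$, $g(b)=a$, and the interior of $\gamma'$ is disjoint from $g(\gamma)$; then $\gamma'\cap g(\gamma')=\{a,b\}$, and by the same intermediate-value reasoning $g(\gamma')\neq\gamma'$, so $J:=\gamma'\cup g(\gamma')$ is a Jordan curve with $g(J)=J$ and $\Fix g\cap J=\emptyset$. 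Because $g$ carries the leg $\gamma'$ of $J$ (traversed from $a$ to $b$) onto the leg $g(\gamma')$ (traversed from $b$ to $a$) and vice versa, $g$ preserves the cyclic orientation of $J$; thus $g|_J$ is an orientation-preserving homeomorphism of $J\cong S^1$. Since $J$ is assembled from $\gamma'\subseteq\Fix I_\Om$ and its $g$-image, one checks that it bounds a closed disc $\overline\Delta\subseteq\Om$, and because $g$ is a self-homeomorphism of $\Om$ fixing $J$ setwise while the complementary region of $J$ in $\Om$ other than $\Delta$ is not a disc with compact closure, we get $g(\overline\Delta)=\overline\Delta$.

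Finally I would close the argument: $g$ is orientation-reversing on $\Om$, so $g|_{\overline\Delta}$ is an orientation-reversing self-homeomorphism of the closed disc $\overline\Delta$ and therefore reverses the orientation of its boundary circle $\partial\Delta=J$ — contradicting the previous paragraph. Hence $f$ has a fixed point on $\gamma$, which is the desired symmetric fixed point. The step I expect to be the main obstacle is the extraction of the $g$-invariant Jordan curve $J$: Brown's innermost-arc construction has to be carried out carefully because $\gamma\cap g(\gamma)$ is a priori only a closed subset of $\gamma$ on which $g$ acts freely and reverses the ends, and one also has to verify the point-set claim that $J$ bounds a disc inside $\Om$ — for non-simply-connected $\Om$ (such as $A$) this means ruling out, using that $\gamma'\subseteq\Fix I_\Om$, that $J$ is essential. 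The rest is formal, and the orientation bookkeeping for $g|_J$ against $g|_{\overline\Delta}$ is the conceptual crux that makes the resulting fixed point symmetric.
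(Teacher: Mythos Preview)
Your approach is genuinely different from the paper's. The paper adapts Brown's fixed-point-index argument: it takes an auxiliary arc $\gamma$ close to the segment $[(0,y),f(0,y)]$ but off $\Fix I_\Om$, forms the $I_\Om$-symmetric loop $\gamma*(I_\Om\circ\bar\gamma)$ bounding a thin simply connected region, and shows via reversibility and the isotopy $f\simeq\id$ that $i(\gamma,f\circ\gamma)=i(I_\Om\circ\bar\gamma,f\circ I_\Om\circ\bar\gamma)\in\tfrac12+\Z$, so the index around the loop is odd --- contradicting the assumed absence of fixed points in the enclosed region. You instead exploit directly that $g=f\circ I_\Om$ is an orientation-reversing involution and look for an orientation clash on a $g$-invariant Jordan curve. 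The paper's route has the practical advantage that its loop is chosen small, so the enclosed region is a disc for free; your $J=\gamma'\cup g(\gamma')$ involves $f(\gamma')$, which can wander, and that is why you have to work for the disc-bounding step.

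There is, however, a concrete error in your innermost-arc step. You assert one can take $\gamma'\subset\gamma$ with $g$-swapped endpoints and with $\mathrm{int}(\gamma')\cap g(\gamma)=\emptyset$. This is too strong and can fail: parametrise $\gamma$ by $[0,1]$, set $K=\gamma\cap g(\gamma)$, and let $\sigma=g|_K$; then $\sigma$ is a continuous fixed-point-free involution of $K$ with $\sigma(0)=1$, but no complementary interval of $K$ need have $\sigma$-swapped endpoints (e.g.\ $K=\{0,\tfrac15,\tfrac25,\tfrac35,\tfrac45,1\}$ with $\sigma:0\!\leftrightarrow\!1,\ \tfrac15\!\leftrightarrow\!\tfrac35,\ \tfrac25\!\leftrightarrow\!\tfrac45$). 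What you actually need --- and what \emph{is} true --- is the weaker condition $\gamma'\cap g(\gamma')=\{a,b\}$: choose $a<b$ in $K$ with $\sigma(a)=b$ minimising $b-a$ (the minimum is positive and attained by compactness and absence of fixed points), and then any $t\in(a,b)\cap K$ has $\sigma(t)\notin[a,b]$, which says precisely that $\mathrm{int}(\gamma')$ misses $g(\gamma')$. Your disc-bounding step can also be completed for general $\Om$, though you do not do so: $g|_J$ has degree $+1$, so $g_*[J]=[J]$; since $f\simeq\id$ this reads $(I_\Om)_*[J]=[J]$, while for a Jordan curve $[J]=\sum_{H\subset D_J}[\ell_H]$ has all coefficients in $\{0,1\}$ and $(I_\Om)_*[\ell_H]=-[\ell_{I_\Om(H)}]$, forcing $[J]=0$; hence $D_J\subset\Om$, and since $\Om\setminus\overline{D_J}$ is never simply connected, $g$ cannot swap the two sides and your orientation contradiction goes through.
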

\begin{proof}
We may assume that $f$ has no fixed point in a small open neighborhood of the interval $[(0,y),f(0,y)]$ since otherwise $f$ has a fixed point on $[(0,y),f(0,y)]$ which is in turn a symmetric fixed point.
We choose an arc $\gamma:[0,1]\to\Om\setminus(\Fix f\cup\Fix I_\Om)$ with $\gamma(0)=(0,y)$ and $\gamma(1)=f(0,y)$ such that the invariant domain $D_\gamma$ enclosed by the loop $\gamma*(I_\Om\circ\bar\gamma)$ is simply connected and contains no fixed points of $f$ . 

Since $f\circ\gamma(1)-\gamma(1)=(0,y)-f(0,y)$ and $f\circ\gamma(0)-\gamma(0)=f(0,y)-(0,y)$ point in opposite directions,
$$
i(\gamma,f\circ\gamma)\in\frac{1}{2}+\Z.
$$
Now we claim that 
$$
i(I_\Om\circ\bar\gamma,f\circ I_\Om\circ\bar\gamma)= i(\gamma,f\circ\gamma)
$$
where $\bar\gamma(t)=\gamma(1-t)$. Indeed,
$$
i (I_\Om\circ\bar\gamma,f\circ I_\Om\circ\bar\gamma)=-i(I_\Om\circ\gamma,f\circ I_\Om\circ\gamma)=-i(I_\Om\circ\gamma,I_\Om\circ f^{-1}\circ\gamma)=i(\gamma,f^{-1}\circ\gamma).
$$
Let $f_s:\Om\to\Om$, $s\in[0,1]$ be an isotopy between $f_0=\Id$ and $f_1=f$. Since $f_s\circ f^{-1}\circ\gamma(1)-f_s\circ\gamma(1)=f_s(0,y)-f_s\circ f(0,y)$ and $f_s\circ f^{-1}\circ\gamma(0)-f_s\circ\gamma(0)=f_s\circ f(0,y)-f_s(0,y)$  point in opposite directions, 
$$
i(f_s\circ\gamma,f_s\circ f^{-1}\circ\gamma)\in\frac{1}{2}+\Z.,
$$ 
for every $s\in[0,1]$ and in particular it holds that
$$
i(\gamma,f^{-1}\circ\gamma)=i(f\circ \gamma,\gamma).
$$
This proves the clam. By the claim, we have
$$
i(\gamma*(I_\Om\circ\bar\gamma),f(\gamma*(I_\Om\circ\bar\gamma)))=i(\gamma,f\circ\gamma)+i(I_\Om\circ\bar\gamma,f\circ I_\Om\circ\bar\gamma)=2\,i (\gamma,f\circ\gamma). 
$$
Using $i(\gamma,f\circ\gamma)\in\frac{1}{2}+\Z$, we deduce
$$
i(\gamma*(I_\Om\circ\bar\gamma),f(\gamma*(I_\Om\circ\bar\gamma)))\in 1+2\Z,
$$
and therefore there is a fixed point of $f$ in $D_\gamma$. This contradiction shows that there exists at least one symmetric fixed point of $f$ inside $D_\gamma$, and hence on $\Fix I_\Om\cap D_\gamma=[(0,y),f(0,y)]$.
\end{proof}

%%%%%%%%%%%%%%%%%%%%%%%%%%%%%%%%%%%%%%%%%%%%%%%%%%%%%%%%%%%%

\section{Proofs of the main results}
\subsection{Proofs of Theorem \ref{thm:Franks-like theorem} and Corollary \ref{cor:dichotomy}}

\begin{Prop}\label{prop:per pt=>int sym per pt}
If an area preserving reversible map $f$ on $A$ or $\mathring A$ has a periodic point, there is an interior symmetric periodic point of $f$.
\end{Prop}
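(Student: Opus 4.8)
The plan is to use the Birkhoff picture of a reversible map as a product of the two involutions $f\circ I_A$ and $I_A$, together with Lemmas~\ref{lem:fix loci} and~\ref{lem:intersection point=>symmetric periodic point}, which turn ``interior symmetric periodic point'' into ``interior mutual intersection of the fixed loci $\alpha_i:=\Fix(f^i\circ I_A)$'', and which say that $f$ permutes the family $\{\alpha_i\}$ by $f(\alpha_i)=\alpha_{i+2}$ (take $j=1$ in Lemma~\ref{lem:fix loci}). First I would reduce to the case that $f$ is orientation preserving and isotopic to $\Id_A$: for a general reversible area preserving $f$ with a periodic point, the iterate $f^2$ is again reversible, area preserving, isotopic to $\Id_A$ (it preserves each boundary circle and the orientation), has a periodic point, and a symmetric periodic point of $f^2$ is one of $f$; so it is enough to treat $f\simeq\Id_A$. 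With this reduction each $f^i\circ I_A$ is an orientation reversing involution isotopic to $I_A$, so by Lemma~\ref{lem:fixd loci of involutions} every $\alpha_i\cong[0,1]\sqcup[0,1]$ consists of two essential (boundary-to-boundary) arcs. Finally I would reduce to the case that the given periodic point is interior: if it lies on $\p A$ then $f$ restricted to that boundary circle is a reversible circle homeomorphism with a periodic point, hence has rational rotation number, and a short rotation-number argument (Poincar\'e--Birkhoff if the two boundary rotation numbers differ, a flux argument if they agree) produces an interior periodic point as well.

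\textbf{The contradiction.}
Suppose $f$ has no interior symmetric periodic point. By Lemma~\ref{lem:intersection point=>symmetric periodic point} we get $\alpha_i\cap\alpha_j\cap\mathring A=\emptyset$ for all $i\neq j$; in particular the $\alpha_i$ are pairwise disjoint in $\mathring A$ and, since none of them can lie in $\p A$ (an orientation reversing circle involution has only two fixed points), pairwise distinct. Also $f$ has infinite order here (if $f^m=\Id$ then $\alpha_0=\alpha_{2m}$, already excluded), so $\{\alpha_{2j}\}_{j\in\Z}$ is an infinite family of distinct, pairwise interior-disjoint pairs of essential arcs which $f$ permutes, via Lemma~\ref{lem:fix loci}, by the free index shift $j\mapsto j+1$; no member is $f$-periodic. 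The interior periodic point $z_0$, with $f^n(z_0)=z_0$, lies on no $\alpha_i$: otherwise $z_0\in\alpha_{i_0}$ forces $z_0=f^n(z_0)\in f^n(\alpha_{i_0})=\alpha_{i_0+2n}\neq\alpha_{i_0}$ in $\mathring A$, contradicting disjointness. Hence $z_0$ lies in a complementary region of this arc family, and since $f^n(z_0)=z_0$ and distinct complementary regions are disjoint, that region is $f^n$-invariant. But an $f$-periodic complementary region is bounded by $f$-periodic arcs of the family, and no arc is periodic -- contradiction. So an interior symmetric periodic point exists.

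\textbf{Main obstacle.}
The delicate point -- and the crux of the argument -- is the last step: a priori the infinite family of disjoint essential arcs accumulates (this is exactly the situation in which the relevant boundary rotation number is irrational, or rational without hyperbolic periodic orbits), so the ``complementary regions'' need not be the naive rectangles bounded by two neighbouring arcs. I would make this rigorous by passing to the universal cover $\R\times[1,2]\to A$: a lift $\tilde f$ and the deck translation $T$ act on the lifted arcs -- now linearly ordered by position -- by two commuting free $\Z$-shifts, and a lift of $z_0$ satisfies $\tilde f^{\,q}(\tilde z_0)=T^{p}\tilde z_0$ for some $q\ge 1$ and $p\in\Z$. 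If $\tilde z_0$ has a well-defined immediate left- and right-neighbouring arc, then $T^{-p}\tilde f^{\,q}$ fixes each of them, which is impossible because $T^{-p}\tilde f^{\,q}$ shifts every arc index by $(-p,q)\neq(0,0)$. The remaining possibility -- that the lifted arcs accumulate onto $\tilde z_0$ -- must be excluded by analysing the limit (minimal) set of the arc family, equivalently the closure of an orbit of the induced boundary circle map, and showing that a periodic point of rotation number $p/q$ forces this limit set to be a periodic configuration so that the arcs cannot in fact accumulate at $z_0$; alternatively one first works with the finite sub-family $\{\alpha_{2j}:|j|\le N\}$ and lets $N\to\infty$. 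The boundary-periodic-point reduction mentioned above should be cheap but also needs to be written out.
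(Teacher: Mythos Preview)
Your overall strategy---turn the question into interior intersections of the arcs $\alpha_i=\Fix(f^i\circ I_A)$, lift to the strip, and use that $g:=T^{-p}\tilde f^{\,q}$ fixes $\tilde z_0$ but shifts the $j$-index of every arc by $q\neq 0$---is a coherent and genuinely different route from the paper's. The part that works is exactly the case you isolate: when $\tilde z_0$ has immediate left and right neighbouring arcs in the linear order, those arcs would have to be $g$-fixed, which is impossible. The genuine gap is the accumulation case, and your proposed remedies (limit-set / minimal-set analysis of the boundary circle map, or truncating to $|j|\le N$ and letting $N\to\infty$) are not enough as stated. The tell is that \emph{area preservation never appears in your main argument}, whereas it is the hypothesis doing the work in the proposition. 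Accumulation is precisely where it must enter: if arcs accumulate on $\tilde z_0$ from (say) the left, pick any arc $\beta$ to the left and any arc $\gamma$ to the right of $\tilde z_0$; since $g$ preserves the linear order, fixes $\tilde z_0$, and fixes no arc, the orbit $\{g^n(\beta)\}$ is strictly monotone and in one direction stays trapped between $\beta$ and $\gamma$. But the strips between consecutive $g^n(\beta)$ all have the same positive area, and infinitely many of them sit inside the finite-area region between $\beta$ and $\gamma$---contradiction. Without this (or an equivalent) area step, the argument is incomplete. Your separate reduction from boundary to interior periodic points (``flux argument'') is also not justified and, as it turns out, unnecessary.

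The paper's proof is shorter and avoids both the cover and the accumulation issue by putting area preservation front and centre. It never leaves $A$ and does not reduce to $f\simeq\Id_A$. One works with the single arc $Y_+\subset\Fix I_A$ and, after passing to $g=f^k$ where $k$ is the period of the given periodic point $z$, observes that $Y_+$ and $g(Y_+)$ bound a region $A_-$ with $\mathrm{area}(A_-)<\tfrac12\,\mathrm{area}(A)$ (because $g(Y_+)$ cannot meet $Y_-$ in $\mathring A$). The iterates $g^m(A_-)$ meet only along common boundary arcs, all have the same area, and hence march around the annulus; either some $g^m(Y_+)$ is forced to cross $Y_-$ in the interior (giving the symmetric periodic point directly), or the $g^m(A_-)$ sweep out the half $\{x\le 0\}$ containing $z$, which then traps the $g$-fixed point $z$ in a region that $g$ moves---contradiction. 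No linear ordering of an infinite arc family, and no accumulation analysis, is needed.
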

\begin{proof}
\begin{figure}[htb]
\includegraphics[width=1.0\textwidth,clip]{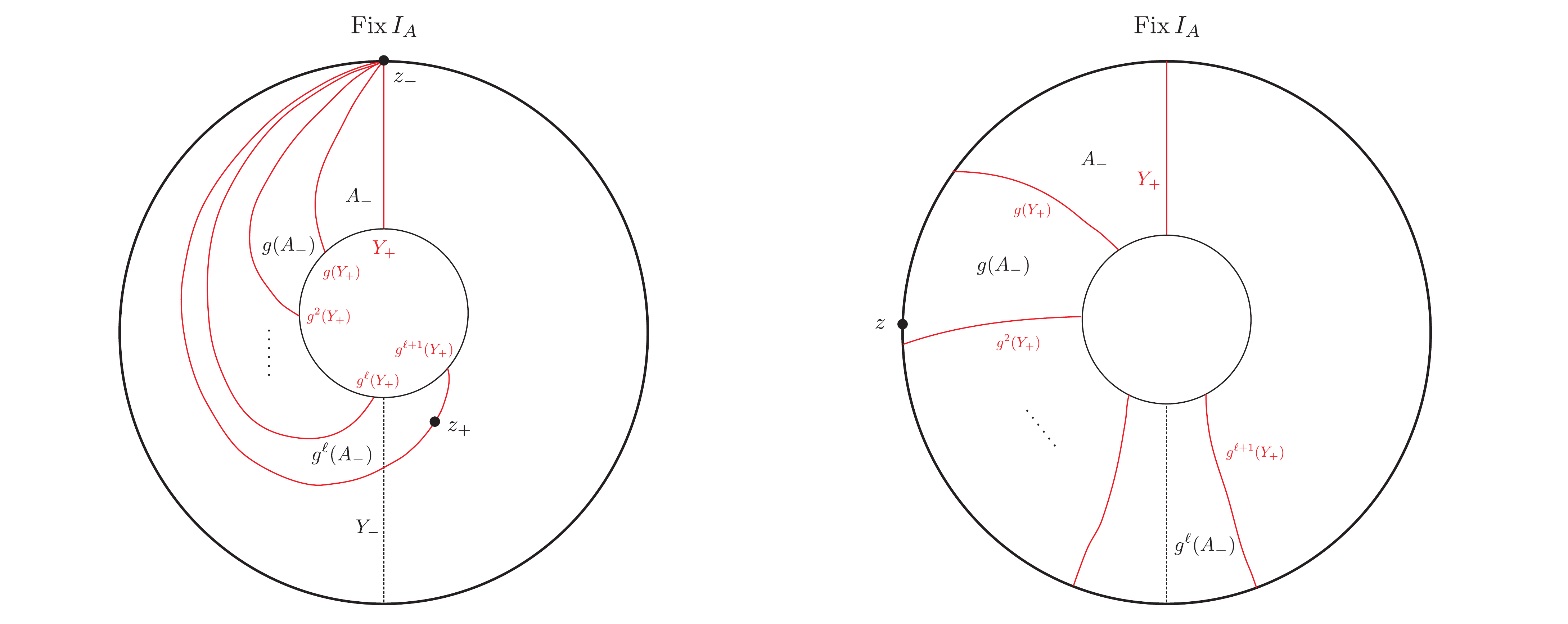}
\caption{}\label{fig:disk0}
\end{figure}
We only give a proof for $A$ which applies to $\mathring A$ as well. Assume by contradiction that there is no interior symmetric periodic point of $f$. We denote the connected components of $\Fix I_A$ by $Y_\pm:=\{0\}\x[\pm1,\pm2]$. By Lemma \ref{lem:fix loci}, $f^m(Y_+)\subset\Fix(f^{2m}\circ I_A)$ for every $m\in\N$, and thus by Lemma \ref{lem:intersection point=>symmetric periodic point}, we have
\beq\label{eq:non-intersections}
f^m(Y_+)\cap Y_\pm\cap\mathring A=\emptyset,\quad m\in\N\cup\{0\}.
\eeq
Two segments  $Y_+$ and $f(Y_+)$ divide $A$ into two closed regions $A_-$ and $A_+$ such that $A_-\cup A_+=A$ and $A_-\cap A_+=Y_+\cup f(Y_+)$. Since $f(Y_+)\cap Y_-\cap\mathring A=\emptyset$, $A_-$ and $A_+$ have different area, say $\mathrm{area}(A_-)<\mathrm{area}(A_+)$. We assume that $A_-\subset\{(x,y)\in\R^2\,|\,x\leq 0\}$. The other case is proved in the same manner. Note that 
\beq\label{eq:area preserving}
\mathrm{area}(A_-)=\mathrm{area}(f^m(A_-))<\frac{\mathrm{area}(A)}{2},\quad m\in\N\cup\{0\}.
\eeq
 We claim that  
\beq\label{eq:boundary intersections}
f^m(A_-)\cap f^{m+1}(A_-)=f^m(Y_+),\quad m\in\N\cup\{0\}.
\eeq 
It suffices to prove the claim for $m=0$. Indeed if this is not true, $A_-\cap f(A_-)\supsetneq f(Y_+)$, and thus $f^2(Y_+)\cap Y_+\cap\mathring A\neq\emptyset$ since $\mathrm{area}(A_-)+\mathrm{area}(f(A_-))<\mathrm{area}(A)$. This contradicts \eqref{eq:non-intersections} and hence \eqref{eq:boundary intersections} follows.

Now we are ready to prove the assertion.
Suppose that $z_-\in Y_+\cap f(Y_+)$. By \eqref{eq:non-intersections}, $z_-\in\p A$. Observe that $f^\ell(A_-)\cap\{(x,y)\in A\,|\, x>0\}\neq\emptyset$ for some $\ell\in\N$ by \eqref{eq:area preserving} and \eqref{eq:boundary intersections}. If $\ell\in\N$ is the minimal number with this property, there exists a point 
$$
z_+\in f^{\ell+1}(Y_+)\cap\{(x,y)\in A\,|\, x>0\}.
$$ 
Since $z_-$ is fixed by $f$,  $f^{\ell+1}(Y_+)$ connects $z_-$ and $z_+$, and therefore $f^{\ell+1}(Y_+)\cap Y_-\cap\mathring A\neq\emptyset$ which ensures the existence of an interior symmetric periodic point of $f$ by Lemma \ref{lem:intersection point=>symmetric periodic point}. See the first picture of Figure \ref{fig:disk0}.

Suppose that $Y_+\cap f(Y_+)=\emptyset$. Let $z\in A$ be a periodic point of $f$, i.e. $f^k(z)=z$ for some $k\in\N$. Abbreviate by $g=f^{k}$. Then 
$$
z\notin g^m(Y_+),\quad m\in\N\cup\{0\}
$$ 
since otherwise $z=g(z)\in Y_+\cap g(Y_+)$.
 We may assume that $z\in\{(x,y)\in A\,|\,x\leq 0\}$ as $I_A(z)$ is also a periodic point of $f$. As before $Y_+$ and $g(Y_+)$ divides $A$ two closed regions and we call the smaller region $A_-$ again. With $g$ and this $A_-$, \eqref{eq:non-intersections}, \eqref{eq:area preserving}, and \eqref{eq:boundary intersections} still hold. By \eqref{eq:area preserving} and \eqref{eq:boundary intersections},
$$
\{(x,y)\in A\,|\,x\leq 0\}\subset \bigcup_{0\leq m\leq \ell}g^m(A_-).
$$ 
Hence the periodic point $z\in g^m( A_-)$ for some $0\leq m\leq \ell$. This contradicts that $z\in\Fix g$ since $g^m(A_-)\cap g^{m+1}(A_-)=g^{m+1}(Y_+)$ by \eqref{eq:boundary intersections} and $z\notin g^{m+1}(Y_+)$, $m\in\N\cup\{0\}$. See the second picture of Figure \ref{fig:disk0}.
\end{proof}

\begin{Thm}\label{thm:sym per pt=>infinite}
If an area preserving reversible map $f$ on $A$ or $\mathring A$ has a periodic point, it possesses infinitely many interior symmetric periodic points.
\end{Thm}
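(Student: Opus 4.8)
The plan is to bootstrap Proposition~\ref{prop:per pt=>int sym per pt} by passing to a suitable iterate. Assume, towards a contradiction, that $f$ has a periodic point but only finitely many interior symmetric periodic points $z_1,\dots,z_r$; by Proposition~\ref{prop:per pt=>int sym per pt} there is at least one, so $r\ge 1$. Let $k_i$ be the $f$-period of $z_i$, set $N:=\lcm(k_1,\dots,k_r)$ and $g:=f^N$. Then $g$ is again an area preserving reversible map on $A$ (resp.\ $\mathring A$), and $g(z_i)=z_i$ for all $i$. Now $g$ has a fixed point, hence a periodic point, so Proposition~\ref{prop:per pt=>int sym per pt} provides an interior symmetric periodic point $w$ of $g$: there are $a,b$ with $g^a(w)=w$ and $g^b(w)=I_A(w)$. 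Then $f^{aN}(w)=w$ and $f^{bN}(w)=I_A(w)$, so $w$ is an interior symmetric periodic point of $f$, whence $w=z_i$ for some $i$; since $g(w)=w$ this forces $I_A(w)=g^b(w)=w$. Thus $g$ has a \emph{symmetric fixed point} $p:=w$, and after possibly swapping the labels $Y_+\leftrightarrow Y_-$ we may assume $p\in Y_+\cap\mathring A$ with $g(p)=p$. The same computation shows, more generally, that \emph{every} interior symmetric periodic point of $g$ is a $g$-fixed point lying on $\Fix I_A$.

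It remains to contradict this for $g$, and for that I would re-run the area/region argument in the proof of Proposition~\ref{prop:per pt=>int sym per pt}. By Lemma~\ref{lem:fix loci} and Lemma~\ref{lem:intersection point=>symmetric periodic point}, for $m\ge1$ every point $q\in g^m(Y_+)\cap\Fix I_A\cap\mathring A$ is an interior symmetric periodic point of $g$, hence $g$-fixed and on $Y_+$; then $g^{-m}(q)=q$ shows $q$ already lies on $Y_+$, so
$$
g^m(Y_+)\cap Y_+\cap\mathring A \;=\; F:=\Fix g\cap Y_+\cap\mathring A\quad(m\ge1),\qquad g^m(Y_+)\cap Y_-\cap\mathring A=\emptyset,
$$
where $F$ is finite (being contained in $\{z_1,\dots,z_r\}$) and contains $p$. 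Let $\sigma\subset Y_+$ be the sub-arc joining the inner boundary circle to the lowest point $p_1$ of $F$ (replacing $g$ by $g^2$ first if $g$ interchanges the two boundary circles). For $m\ge1$ the arc $g^m(\sigma)$ then meets $\sigma\cup Y_-$ inside $\mathring A$ only at the fixed endpoint $p_1$, so $\sigma$ occupies the position of $Y_+$ in the proof of Proposition~\ref{prop:per pt=>int sym per pt}, with the single harmless addition of the pinned corner $p_1$: the region $A'_-$ bounded by $\sigma$, $g(\sigma)$ and an arc of the inner circle cannot cross $\Fix I_A$, the analogues of \eqref{eq:non-intersections} and \eqref{eq:boundary intersections} hold for $\sigma$ and $A'_-$, and running the area argument of that proof — using the $g$-fixed point $p$ in place of the periodic point where needed — produces the contradiction.

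The main obstacle is precisely this transplantation. One must verify that cutting $Y_+$ at the finite $g$-fixed set $F$ still yields a sub-arc $\sigma$ separating $A$ into regions of controlled area; that the iterates $g^m(\sigma)$ genuinely march across the annulus rather than nesting around $p_1$ — the degenerate alternative $g^j(\sigma)=\sigma$ for some $j$ being disposed of separately by passing to $g^j$ and using the next sub-arc; that the $g$-orbit of the endpoint $\sigma\cap\partial A$ on the inner circle opens no loophole; and that both cases of the proof of Proposition~\ref{prop:per pt=>int sym per pt} survive verbatim in this pinned setting. By contrast the first half — the passage to $g$ and the identification of the interior symmetric periodic points of $g$ with symmetric fixed points of $g$ — is purely formal, relying only on Lemmas~\ref{lem:fix loci}--\ref{lem:intersection point=>symmetric periodic point} together with Proposition~\ref{prop:per pt=>int sym per pt}. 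I expect the area bookkeeping in the second half to be elementary but the more delicate part of the argument.
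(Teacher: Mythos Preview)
Your strategy is essentially the paper's own. The paper also passes to an iterate $g$ that fixes every interior symmetric periodic point (it takes $g=f^{2n!}$ rather than $f^{\lcm(k_i)}$, the extra factor of $2$ absorbing the orientation issue you handle by squaring), and then runs the area-marching argument of Proposition~\ref{prop:per pt=>int sym per pt} on a sub-arc of $\Fix I_A$. The only cosmetic difference is packaging: the paper deletes the points $z_1,\dots,z_r$ to form a punctured annulus $M$ and works with the \emph{top} component $Y_1$ of $\Fix I_M$, whereas you keep $A$ intact and cut $Y_+$ at the lowest point $p_1$ of $F$. Your $\sigma$ and the paper's $Y_1$ are the same object up to the inessential choice of top versus bottom.

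One place where your write-up is actually cleaner than the paper's: you apply Proposition~\ref{prop:per pt=>int sym per pt} to $g$ rather than to $f$, which immediately forces the resulting interior symmetric periodic point to be a $g$-fixed point on $\Fix I_A$, guaranteeing $F\neq\emptyset$ on one of $Y_\pm$. The paper leaves this step implicit. Conversely, the ``obstacles'' you flag in your last paragraph are exactly the details the paper compresses into two sentences: once you know $g^m(\sigma)\cap\Fix I_A\cap\mathring A=\{p_1\}$ for all $m$ (which you have verified), every $g^m(\sigma)\setminus\{p_1\}$ lies in a single open half-annulus, the regions $g^m(A'_-)$ are pairwise interior-disjoint with common corner $p_1$, and since none of them can contain the other fixed points of $g$ on $\Fix I_A$ they are trapped on one side of the $y$-axis --- contradicting area preservation. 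The degenerate case $g^j(\sigma)=\sigma$ you worry about cannot occur, since it would force the whole arc $\sigma\cap\mathring A$ into the finite set $F$.
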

\begin{proof}

\begin{figure}[htb]
\includegraphics[width=0.6\textwidth,clip]{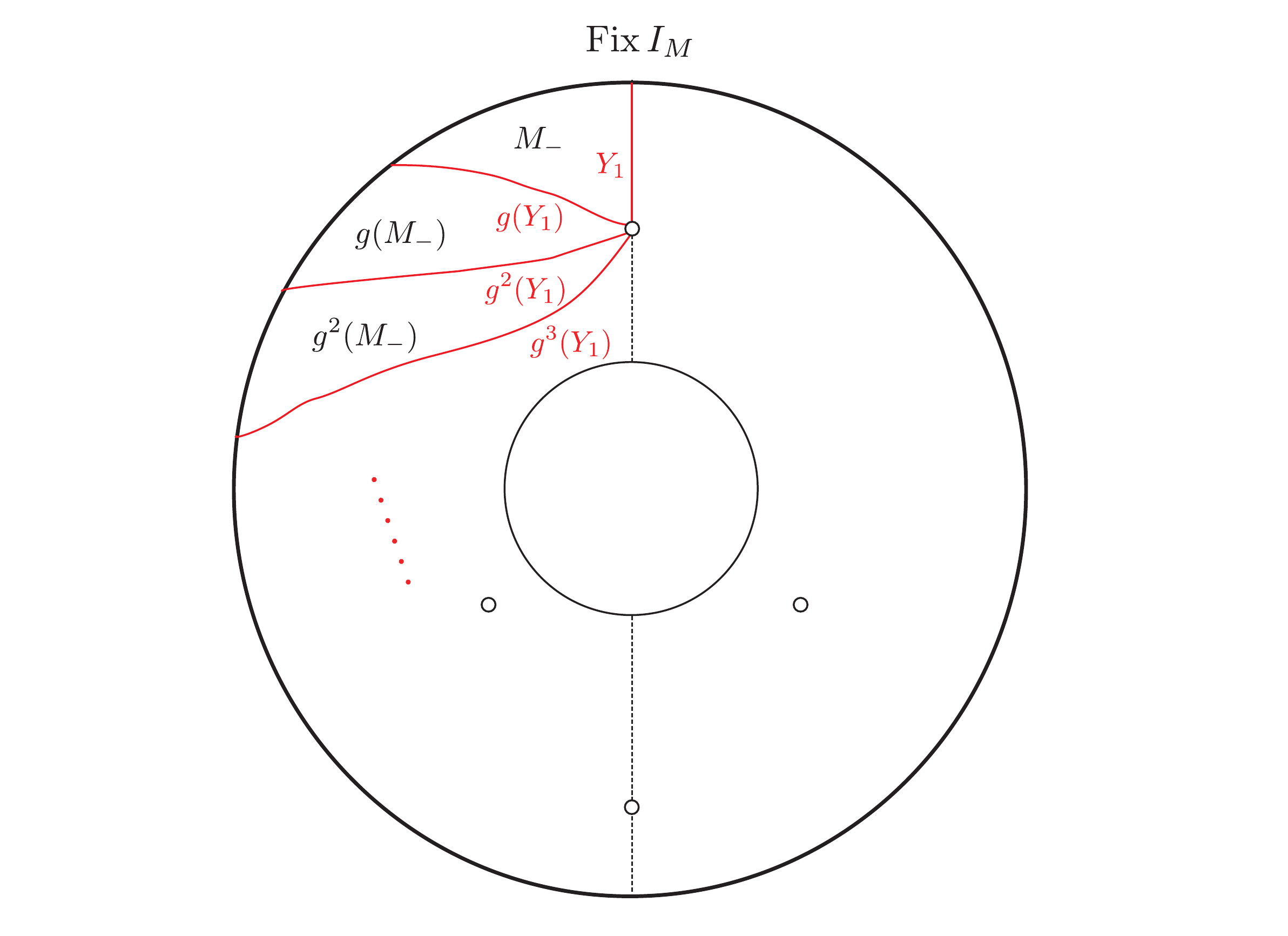}
\caption{}\label{fig:disk1}
\end{figure}

See Figure \ref{fig:disk1}. We will prove the theorem only for $A$ and the same proof goes through for $\mathring A$ as well. Due to Proposition \ref{prop:per pt=>int sym per pt}, there is an interior symmetric periodic point of $f$.
Suppose that there are only finitely many interior symmetric periodic points of $f$. By deleting interior symmetric periodic points from $A$, we obtain a punctured annulus which we denote by $M$. Suppose that $M$ has $n\geq1$ punctures. Since $f$ is homeomorphism, $f$ is permutes $n$ punctures and thus the iterated map $g:=f^{2n!}$ fixes punctures pointwise. Moreover $g$ is isotopic to the identity and in particular $g$ preserves the inner/outer boundary circle of $M$.

The fixed locus $\Fix I_M$ consists of several connected components. We denote by $Y_1$ the component at the very top, i.e. $Y_1$ connects the outer boundary of $A$ and the very top puncture or the inner boundary circle of $A$. We observe that
\beq\label{eq:e_i}
g^m(Y_1)\cap\Fix I_M\cap\mathring M=\emptyset,\quad  m\in\N\cup\{0\}
\eeq
as in \eqref{eq:non-intersections} due to Lemma \ref{lem:fix loci} and Lemma \ref{lem:intersection point=>symmetric periodic point}. 
Note that $Y_1$ and $g(Y_1)$ separates $M$ into two closed regions $M_-$ and $M_+$. Due to \eqref{eq:e_i} with  $m=1$, $M_-$ and $M_+$ have different area, say $\mathrm{area}(M_-)<\mathrm{area}(M_+)$. Observe as before, see  \eqref{eq:boundary intersections}, that 
\beq\label{eq:bdry intersection}
g^m(M_-)\cap g^{m+1}(M_-)=g^m(Y_1),\quad  m\in\N\cup\{0\}.
\eeq 
Since $g$ fixes punctures pointwise, there is no puncture inside every $g^m(M_-)$, $m\in\N\cup\{0\}$ by \eqref{eq:bdry intersection}. There is no loss of generality in assuming $M_-\subset\{(x,y)\in M\,|\,x\leq 0\}$. Due to \eqref{eq:e_i} and the fact that there is at least one pucture on $\{(x,y)\in A\,|\,x\leq 0\}$, every $g^m(Y_1)\subset \{(x,y)\in M\,|\,x\leq 0\}$ for all $m\in\N\cup\{0\}$ which contradicts that $g$ is preserves the area of $M_-$. This completes the proof.
\end{proof}
One can use a covering picture to prove the above theorem, see Theorem \ref{thm:infinitely many odd}.

\begin{Prop}\label{prop:symmetric fixed point}
Every area preserving reversible map $f$ on $D$ or $\mathring D$ has an interior symmetric fixed point.
\end{Prop}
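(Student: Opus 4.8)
The plan is to reduce the statement on $D$ (or $\mathring D$) to the already-established machinery by analyzing the orientation-reversing involution $J := f \circ I_D$ and its fixed locus. First I would observe that $f\circ I_D$ is an involution on $D$, and — since $f$ is a homeomorphism of the disk, hence isotopic to $\Id_D$ or $I_D$ according to its orientation behavior, and $I_D$ is orientation reversing — the composition $J = f\circ I_D$ is an orientation-reversing involution on $D$. By Lemma \ref{lem:fixd loci of involutions} its fixed locus $\Fix J$ is an embedded arc $\cong[0,1]$ with endpoints on $\p D$. The identity $\Fix f\cap\Fix I_D=\Fix J\cap\Fix I_D$ then tells me that a symmetric fixed point of $f$ is exactly an intersection point of the two arcs $\alpha := \Fix J$ and $Y := \Fix I_D$, both of which are properly embedded arcs in $D$ joining two boundary points. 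So the whole statement becomes: these two arcs must meet (and in fact meet in the interior).

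The next step handles the case distinction on the orientation of $f$. If $f$ is orientation preserving, then $I_D(\alpha)=I_D(\Fix(f\circ I_D))=\Fix(I_D\circ f^{-1}\circ I_D\cdot I_D)$; more directly, reversibility gives $f\circ I_D(\alpha)=\alpha$ exactly when $f I_D f I_D = \id$, which holds, so $I_D$ preserves $\alpha$ setwise. An orientation-reversing involution of the disk preserving a properly embedded arc $\alpha$ and also preserving the arc $Y=\Fix I_D$: I would use an intersection-parity / Jordan-arc argument. Both $\alpha$ and $Y$ separate $D$ into two halves; if they were disjoint, $\alpha$ would lie entirely in one of the two components of $D\setminus Y$, say the one with $x<0$. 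But $\alpha=\Fix(f\circ I_D)$ and, because $f$ is area preserving (hence has no wandering points), one derives a contradiction exactly as in the proof of the $\R^2$-version above: iterating $f$ on the smaller region cut off by $\alpha$ and $Y$, area preservation forces the orbit to spill across, producing an intersection of $f^m(Y)$ with $Y$ in the interior, i.e. a symmetric periodic point, which by Corollary \ref{cor:index}-type index bookkeeping together with the Lefschetz-Hopf count on $D$ already forces a symmetric fixed point. The cleanest route, though, is probably: apply the argument of Proposition \ref{prop:per pt=>int sym per pt} / Theorem \ref{thm:sym per pt=>infinite} verbatim with $A$ replaced by $D$ and $Y_+$ replaced by $Y=\Fix I_D$, using that $\mathrm{area}$ is preserved and $\p D$ is a single circle; the region-splitting and area argument goes through unchanged and yields an interior intersection point $f^m(Y)\cap Y\cap\mathring D\neq\emptyset$, hence a symmetric periodic point, and then the Lefschetz-Hopf identity $\sum_{z\in\Fix f}i(f,z)=1$ on $D$ combined with Corollary \ref{cor:index} (indices at $z$ and $I_D(z)$ agree, so non-symmetric fixed points contribute in pairs) forces at least one symmetric fixed point.

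For the orientation-reversing case, $f$ is isotopic to $I_D$, so $f\circ I_D$ is isotopic to $\Id_D$, hence $g:=f\circ I_D$ is an orientation-preserving homeomorphism of $D$ and by Brouwer it has a fixed point $z_0$; but $g(z_0)=z_0$ means $f(I_D z_0)=z_0$, i.e. $z_0$ is an honest fixed point of the involution... more usefully, I would instead note $f$ orientation reversing $\Rightarrow$ $f$ itself, being a homeomorphism of the disk isotopic to $I_D$, has $\Fix f\neq\emptyset$ and the index count $\sum i(f,z)=1$ still holds (Lefschetz-Hopf), while Corollary \ref{cor:index} now gives $i(f,z)=-i(f,I_D z)$, so again non-symmetric fixed points cancel in pairs and a symmetric one must survive to make the total $1$. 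The $\mathring D$ case follows because a symmetric fixed point produced this way is interior: if the intersection point of $\alpha$ and $Y$ were on $\p D$, a small perturbation / the transversality-at-the-boundary of the arcs lets one push it inside, or more simply one repeats the area argument which directly delivers the interior condition $f^m(Y)\cap Y\cap\mathring D\neq\emptyset$.

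The main obstacle I anticipate is the boundary behavior: making sure the symmetric fixed point is genuinely \emph{interior} rather than sitting on $\p D$ (the two boundary endpoints of $\alpha$ and of $Y$ are automatically fixed-type points and must be excluded), and handling the degenerate situation where $f$ has infinitely many fixed points so that Lefschetz-Hopf in the naive indexed form does not directly apply — there one falls back on the area/region-splitting argument, which is robust to infinitely many fixed points, exactly as in Proposition \ref{prop:per pt=>int sym per pt}. Reconciling these two regimes cleanly is where the real work lies.
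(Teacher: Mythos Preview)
Your proposal circles around the right objects but misses the two short ideas that actually close the argument, and one of your intermediate claims is false.

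For the orientation preserving case, the key observation you do not make is that $J=f\circ I_D$ is an \emph{area preserving} involution: it exchanges the two components of $D\setminus\Fix J$, so the arc $\alpha=\Fix J$ cuts $D$ into two pieces of \emph{equal} area. Since $Y=\Fix I_D$ does the same, the two arcs are forced to cross in $\mathring D$. That is the whole proof in this case; no iteration, no Lefschetz--Hopf. Your attempted shortcut, ``$I_D$ preserves $\alpha$ setwise,'' is incorrect: conjugating $f\circ I_D$ by $I_D$ gives $I_D\circ f = f^{-1}\circ I_D$, so $I_D(\Fix(f\circ I_D))=\Fix(f^{-1}\circ I_D)$, which is a different arc in general. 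Your fallback route via the region-splitting argument of Proposition~\ref{prop:per pt=>int sym per pt} only produces a symmetric \emph{periodic} point, and promoting that to a symmetric \emph{fixed} point via Lefschetz--Hopf and Corollary~\ref{cor:index} needs the finiteness hypothesis you yourself flag and never discharge.

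For the orientation reversing case, the paper's move is to pass to $f^2$, which is orientation preserving and still area preserving and reversible; the previous paragraph then yields an interior symmetric fixed point of $f^2$, i.e.\ an interior symmetric $2$-periodic point of $f$ on $\Fix I_D$, and Proposition~\ref{period 2 real periodic point=>real fixed point} upgrades this to an interior symmetric fixed point of $f$. Your index-cancellation argument ($i(f,z)=-i(f,I_D z)$, so non-symmetric pairs contribute $0$ while $\sum i=1$) is a nice heuristic, but again it presupposes finitely many fixed points and does not by itself force the surviving fixed point into the interior.
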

\begin{proof}

The proof is stated for $D$ but the same argument works for $\mathring D$. As observed it suffices to show that 
$$
\Fix (f\circ I_D)\cap\Fix I_D\cap\mathring D\neq\emptyset.
$$
Suppose that $f$ is orientation preserving. Then since $f\circ I_D$ is an orientation reversing area preserving involution, $\Fix(f\circ I_D)$ is an embedded line in $D$ (see Lemma \ref{lem:fixd loci of involutions}) which divides $D$ into two regions with the same area. Thus it has to cross the interior part of $\Fix I_D$.

On the other hand, if $f$ is orientation reversing, we consider $f^2$ instead. Then by the above argument, $f^2$ has an interior symmetric fixed point which in turn implies that $f$ has a symmetric 2-periodic point on $\Fix I_D\cap \mathring D$. Applying Proposition \ref{period 2 real periodic point=>real fixed point}, we can find an interior symmetric fixed point of $f$.
\end{proof}

\subsection{Proof of  Theorem \ref{thm:dichotomy2}}

\begin{Prop}
If an area preserving reversible map $f$ on $A$ or $\mathring A$ isotopic to the identity has an odd-periodic point, there is an interior symmetric odd-periodic point of $f$.
\end{Prop}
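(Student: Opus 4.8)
The plan is to adapt the area-comparison argument of Proposition \ref{prop:per pt=>int sym per pt} to the odd-periodicity setting, but working on a suitable covering or with the cut-segment $Y_+$ replaced by a union of iterates in a way that respects parity. First I would recall that, by Lemma \ref{lem:fix loci}, $f^m(\Fix I_A)\subset\Fix(f^{2m}\circ I_A)$, so the loci $\Fix(f^{2m}\circ I_A)$ are precisely the $f$-iterates of $\Fix I_A$, while $\Fix(f^{2m+1}\circ I_A)=f^m(\Fix(f\circ I_A))$ are the iterates of $\Fix(f\circ I_A)$. Since $f\circ I_A$ is itself an orientation reversing involution isotopic to $I_A$ (here the hypothesis that $f$ is isotopic to the identity is essential), Lemma \ref{lem:fixd loci of involutions} tells us that $\Fix(f\circ I_A)\cong[0,1]\sqcup[0,1]$, i.e. it is another pair of disjoint arcs joining the two boundary circles of $A$, just like $\Fix I_A$. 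Denote one of its components by $Z_+$. Then a symmetric \emph{odd}-periodic point is, by Lemma \ref{lem:intersection point=>symmetric periodic point}, exactly an interior intersection point of some $f^j(Y_+)$ with some $f^m(Z_+)$ (an even iterate of $\Fix I_A$ meeting an odd iterate), since $(2m+1)-(2j)$ is odd.

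The key steps, in order, are: (1) assume for contradiction that there is no interior symmetric odd-periodic point, i.e. $f^j(Y_\pm)\cap f^m(Z_\pm)\cap\mathring A=\emptyset$ for all $j,m$, and in particular $Y_+\cap Z_+\cap\mathring A=\emptyset$; (2) the two arcs $Y_+$ and $Z_+$ together cut $A$ into two closed regions (up to the possibility that they are disotopic on the boundary, which one handles by the same boundary-point bookkeeping as in Proposition \ref{prop:per pt=>int sym per pt}, using that a boundary fixed point of $f\circ I_A$ is automatically a boundary point), one of which, call it $A_-$, has area strictly less than $\frac12\,\mathrm{area}(A)$; (3) iterate by $f$: by the non-intersection hypothesis the segments $f^m(Y_+)$ and $f^m(Z_+)$ never cross the other family in the interior, so the regions $f^m(A_-)$ overlap only along shared boundary arcs, exactly as in \eqref{eq:boundary intersections}; (4) since $\mathrm{area}(f^m(A_-))=\mathrm{area}(A_-)<\frac12\,\mathrm{area}(A)$ for all $m$, the union $\bigcup_{m\le\ell}f^m(A_-)$ cannot engulf the half of $A$ on the opposite side of $\Fix I_A$, yet it must eventually spill across by the area bound — this forces one of the segments $f^{m+1}(Y_+)$ (which emanates from a boundary fixed point, if one exists) or the chain of regions to cross $Y_-$ or to produce the odd intersection we excluded; (5) an odd-periodic point itself (the given hypothesis) lies, after passing to $g=f^k$ with $k$ odd, in one of the regions $g^m(A_-)$ and, being fixed by $g$ but not lying on the shared boundary arcs, yields the contradiction, precisely mirroring the second half of Proposition \ref{prop:per pt=>int sym per pt}. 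One must only check that replacing $f$ by an odd power $g=f^k$ is harmless: $g$ is still area preserving, still reversible, still isotopic to the identity, and $\Fix(g\circ I_A)$ is still the appropriate odd-type arc.

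Alternatively — and this is probably the cleaner route, parallel to the remark ``one can use a covering picture'' after Theorem \ref{thm:sym per pt=>infinite} — I would lift to a cyclic cover. The point of passing to odd periods is that on the $k$-fold cover (or an appropriate quotient) the odd-periodic point becomes genuinely periodic and the two involutions $I_A$ and $f\circ I_A$ lift compatibly; then the bald Franks-type statement Proposition \ref{prop:per pt=>int sym per pt} applies upstairs and projects back down. I would set up the cover so that a symmetric odd-periodic point downstairs corresponds to a symmetric periodic point upstairs, invoke Proposition \ref{prop:per pt=>int sym per pt}, and descend.

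The main obstacle, I expect, is step (2)/(4): ensuring that $Y_+$ together with $Z_+=\Fix(f\circ I_A)$ really separates $A$ into two pieces and that the area bookkeeping goes through when the arcs share boundary endpoints or when $f\circ I_A$ has boundary fixed points. In Proposition \ref{prop:per pt=>int sym per pt} this was handled by a somewhat delicate case analysis around the boundary fixed point $z_-\in Y_+\cap f(Y_+)$; here the analogous case analysis has to be carried out for $Y_+\cap Z_+$ and must correctly account for the \emph{parity} of the iterates, so that the intersection point produced is an \emph{odd}-periodic point and not merely a periodic one. Getting the parities to line up — i.e. checking that the region-chain $A_-,f(A_-),\dots$ built from one $I_A$-arc and one $(f\circ I_A)$-arc forces an intersection of the form ``even iterate of $Y_+$ meets odd iterate of $Z_+$'' — is the real content, and is where the hypothesis ``$f$ isotopic to the identity'' does its work, via Lemma \ref{lem:fixd loci of involutions} guaranteeing $\Fix(f\circ I_A)$ has the same arc type as $\Fix I_A$.
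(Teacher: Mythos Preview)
Your proposal correctly isolates the two ingredients the paper singles out: (i) since $f$ is isotopic to the identity, $f\circ I_A$ is an orientation reversing involution isotopic to $I_A$, so by Lemma~\ref{lem:fixd loci of involutions} its fixed locus $\Fix(f\circ I_A)=Z_+\sqcup Z_-$ is a pair of arcs separating $A$; and (ii) by Lemmas~\ref{lem:fix loci} and~\ref{lem:intersection point=>symmetric periodic point}, interior points of $f^m(\Fix(f\circ I_A))\cap\Fix I_A$ are symmetric odd-periodic points. The paper's proof is then literally: rerun Proposition~\ref{prop:per pt=>int sym per pt} with $Y_\pm$ replaced by $Z_\pm$. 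In particular the region $A_-$ is bounded by $Z_+$ and $f(Z_+)$, so that $f^m(A_-)$ is bounded by $f^m(Z_+)$ and $f^{m+1}(Z_+)$ and consecutive iterates share the arc $f^{m+1}(Z_+)$; the chaining identity~\eqref{eq:boundary intersections} and the rest of the argument carry over verbatim.

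Your execution, however, departs from this and introduces a genuine gap in step~(3). You bound $A_-$ by \emph{one arc of each type}, $Y_+$ and $Z_+$. Then $f^m(A_-)$ is bounded by $f^m(Y_+)$ and $f^m(Z_+)$, while $f^{m+1}(A_-)$ is bounded by $f^{m+1}(Y_+)$ and $f^{m+1}(Z_+)$: there is no shared boundary arc at all, so the claim ``the regions $f^m(A_-)$ overlap only along shared boundary arcs, exactly as in~\eqref{eq:boundary intersections}'' has no content. The non-intersection hypothesis forbids $Y$-type arcs from meeting $Z$-type arcs in the interior, but says nothing about $f^m(Y_+)\cap f^{m'}(Y_+)$ or $f^m(Z_+)\cap f^{m'}(Z_+)$ (those would be even-period statements), so your regions may overlap arbitrarily and the area-tiling contradiction does not fire. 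Mixing one $Y$-arc with one $Z$-arc destroys the telescoping structure that makes Proposition~\ref{prop:per pt=>int sym per pt} work; replacing $Y_\pm$ wholesale by $Z_\pm$, as the paper does, preserves it. Your covering alternative is closer in spirit to the proof of Theorem~\ref{thm:infinitely many odd} than to the present proposition, and as sketched it does not yet explain why the symmetric periodic point produced upstairs descends to one of \emph{odd} period.
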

\begin{proof}
This follows from almost the same argument as in the proof of Proposition \ref{prop:per pt=>int sym per pt}. Instead repeating the argument, we briefly explain why the proof carries over to this proposition. The first key point is that $f$ is additionally assumed to be isotopic to the identity. This ensures that $\Fix (f\circ I_A)$ separates $A$ into two regions by Lemma \ref{lem:fixd loci of involutions}. The second reason is that points in 
$$
f^m(\Fix(f\circ I_A))\cap \Fix I_A,\quad m\in\N\cup\{0\}
$$ 
are symmetric odd-periodic points according to Lemma \ref{lem:fix loci} and Lemma \ref{lem:intersection point=>symmetric periodic point}. Hence the proof of Proposition \ref{prop:per pt=>int sym per pt} with $Y_\pm$ replaced by two connected components of $\Fix (f\circ I_A)$ proves the present proposition.
\end{proof}

It is conceivable that the following theorem also immediately follows from the proof of Theorem \ref{thm:sym per pt=>infinite} with minor modifications. However we provide a slightly different picture.

\begin{Thm}\label{thm:infinitely many odd}
Let $f$ be an area preserving reversible map on $A$ or $\mathring A$ isotopic to the identity. If $f$ has a odd-periodic point, there are infinitely many interior symmetric odd-periodic points.
\end{Thm}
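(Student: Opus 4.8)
The plan is to argue by contradiction, transferring the problem to the universal cover of $A$, where $\Fix I_A$ unfolds to a $\Z$-periodic grid of parallel crossing arcs and an area count in a fundamental domain replaces the puncturing trick used in the proof of Theorem~\ref{thm:sym per pt=>infinite}. So suppose $f$ has only finitely many interior symmetric odd-periodic points, and denote this set by $P$. By Lemmas~\ref{lem:fix loci} and \ref{lem:intersection point=>symmetric periodic point}, $f$ sends interior symmetric odd-periodic points to interior symmetric odd-periodic points and hence permutes the finite set $P$; so, replacing $f$ by a suitable even iterate $g:=f^{2N}$ — still area preserving, reversible and isotopic to the identity — I may assume $g$ fixes every point of $P$. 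Note also that, since $f$ is isotopic to the identity, $f\circ I_A$ is an orientation-reversing involution isotopic to $I_A$, so by Lemma~\ref{lem:fixd loci of involutions} $\Fix(f\circ I_A)$ consists of two crossing arcs of $A$; I would fix one of them and call it $\beta$.

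Next I would pass to the universal covering $\pi\colon\wt A\to A$, identifying $\wt A$ with the strip $\R\times[1,2]$ and the deck group with the cyclic group generated by the unit rotation $T$. The preimage $\pi^{-1}(\Fix I_A)=\bigsqcup_{j\in\Z}\ell_j$ is a disjoint union of crossing arcs of $\wt A$, where each $\ell_j=\Fix\tilde I_j$ is the fixed locus of a lift $\tilde I_j$ of $I_A$ and $T$ shifts the index by $2$. Since $g\simeq\mathrm{id}$ it lifts to a homeomorphism $G$ of $\wt A$ commuting with $T$; and since $g$ fixes $P$, one may — exactly as in the proof of Theorem~\ref{thm:sym per pt=>infinite} — take $G$ so that it fixes $\pi^{-1}(P)$ pointwise. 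The lift $G$ preserves the (infinite) measure $\pi^{*}(\mathrm{area})$ and sends each boundary line of $\wt A$ to itself.

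Now I would lift $\beta$ to a crossing arc $C\subset\wt A$ and set $C_m:=G^m(C)$ for $m\ge 0$. Each $C_m$ is again a properly embedded arc joining the two boundary lines of $\wt A$, and it is a lift of a component of $g^m(\beta)$, which by Lemma~\ref{lem:fix loci} is contained in $\Fix(f^{4Nm+1}\circ I_A)$. By Lemma~\ref{lem:intersection point=>symmetric periodic point}, every interior point of $\Fix(f^{4Nm+1}\circ I_A)\cap\Fix I_A$ is a symmetric odd-periodic point of $f$ and hence lies in $P$; consequently $C_m$ meets the grid $\bigsqcup_j\ell_j$ in $\mathring{\wt A}$ only at points of $\pi^{-1}(P)$, and these are fixed by $G$. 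Thus all the arcs $C_m$ cross $\wt A$ through one and the same discrete, $T$-periodic set of ``gates'' on the grid, and between consecutive gates each $C_m$ lies in a single room of the grid.

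It remains to extract a contradiction, and this is the step I expect to require the most care. Since $G$ carries $C_m$ to $C_{m+1}$ while fixing every gate, consecutive arcs $C_m$ and $C_{m+1}$ cut off a region $U_m\subset\wt A$ with $G(U_{m-1})=U_m$, so all the $U_m$ share the same area; if one shows this common area is positive and that the $U_m$ pile up essentially disjointly inside a region of finite area — equivalently, that the arcs $C_m$ do not wind unboundedly far along the infinite strip — then only finitely many $m$ are possible, a contradiction, whence the theorem. Exactly as in Proposition~\ref{prop:per pt=>int sym per pt}, the delicate point is this confinement of the $U_m$ to a finite-area region; it is where area preservation, the isotopy to the identity, and the fact that $G$ fixes every gate all enter in an essential way, and turning it into a rigorous plane-topology argument is the real content of the proof.
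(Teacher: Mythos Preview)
Your outline shares the covering idea with the paper but leaves the decisive step open, and in fact the contradiction you aim for is not available in general. In the paper's argument the iterated arcs sometimes \emph{do} wind unboundedly along the strip; far from being a case to rule out, this is precisely what manufactures infinitely many symmetric odd-periodic points, since an arc that travels far must cross many lifts $\{n\pi\}\times[0,1]$ of $\Fix I_A$, and each interior crossing descends to such a point by Lemmas~\ref{lem:fix loci} and~\ref{lem:intersection point=>symmetric periodic point}. So a pure ``confinement $+$ area'' contradiction cannot close the argument; the wandering case has to be handled on its own terms, and that is exactly the step you leave as ``the real content of the proof''.

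There is also a concrete problem with your setup. The claim that a lift $G$ of $g=f^{2N}$ can be taken to fix $\pi^{-1}(P)$ pointwise is not supported by the proof of Theorem~\ref{thm:sym per pt=>infinite} (which punctures $A$ rather than lifting it) and is false in general: distinct points of $P$ can have different rotation numbers under $g$, and then no single lift fixes all the fibres simultaneously. More importantly, by passing to an \emph{even} iterate and tracking $\beta\subset\Fix(f\circ I_A)$ you discard the structure that makes the paper's proof run. The paper sets $g=f^{k}$ with $k$ the \emph{odd} period of a known interior symmetric odd-periodic point $z\in\Fix I_A$; the lift $G$ is then chosen reversible with respect to $I_S$ and so that $\Fix(G\circ I_S)\cap\Fix I_S\neq\emptyset$. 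A sub-arc $s\subset\Fix(G\circ I_S)$ is thereby anchored at a $G$-fixed interior point $s_-\in\Fix I_S$, and reversibility gives $I_S(s)=G^{-1}(s)$, hence $G^{m}(s)\cap G^{m+1}(s)\cap\mathring S=\{s_-\}$ for all $m$. With this anchoring the proof bifurcates cleanly: either some $G^{q_0}(s)$ meets $\Fix I_S$ at a new interior point (and then so does every $G^{q}(s)$ for $q\ge q_0$, yielding infinitely many), or it never does, in which case the images $G^{\ell}(D)$ of the region bounded by $s\cup I_S(s)$ and the top edge are essentially disjoint with infinite total area and therefore meet arbitrarily many lifts $\{n\pi\}\times[0,1]$ of $\Fix I_A$. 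This reversibility-based anchoring, which simultaneously guarantees odd periods at every intersection, is what your even-iterate framework lacks.
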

\begin{proof}
\begin{figure}[htb]
\includegraphics[width=1.\textwidth,clip]{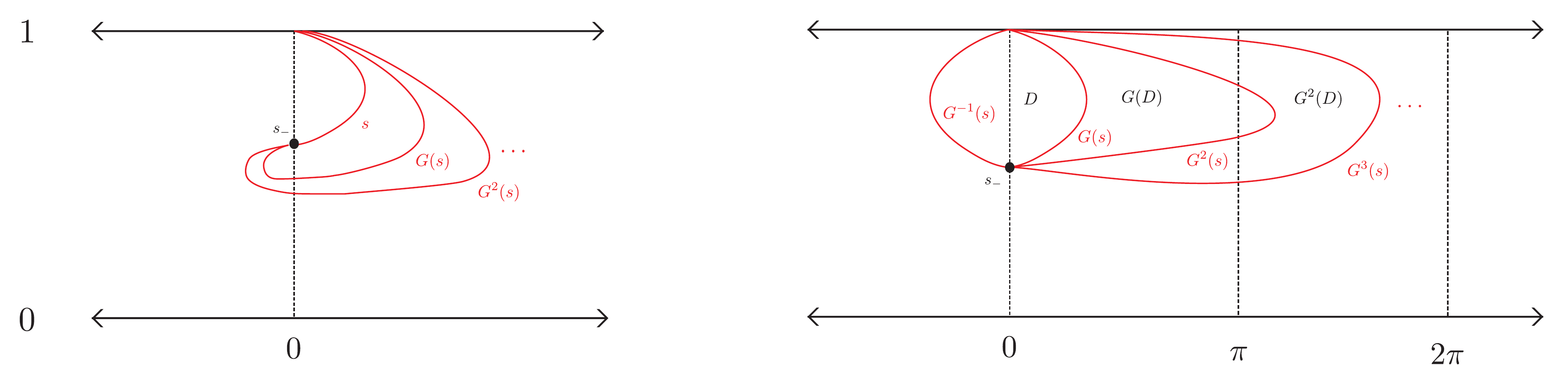}
\caption{}\label{odd case}
\end{figure}

We prove theorem for $A$ and the case $\mathring A$ can be proved in a similar vein.
Due to the previous proposition we assume that there is an interior symmetric $k$-periodic point $z$ of $f$ for $k\in 2\N-1$. Since $k$ is odd, one of $f^\ell(z)$, $1\leq\ell\leq k$ lies on $\Fix I_{A}$, we may assume $z\in\Fix I_{A}\cap\mathring A$. Denote by $g=f^k$. We consider the covering $S\to A$ and lift $I_A$ to $I_S$. Thus we can choose a lift $G:S\to S$ of $g:{A}\to { A}$ such that $G\circ I_S=I_S\circ G^{-1}$ and 
$$
\Fix (G\circ I_{ S})\cap \Fix I_{ S}\neq\emptyset.
$$ 
Suppose that the cardinality of the above set is finite since otherwise there are infinitely many symmetric $k$-periodic points. Since $f$ and hence $G$ is isotopic to the identity, $\Fix (G\circ I_S)$ is a line connecting the upper boundary and the lower boundary of $S$, see Lemma \ref{lem:fixd loci of involutions}. We choose the closed sub-segment $s$ of $\Fix (G\circ I_{ S})$ with boundary $s_\pm$ satisfying
$$
s\cap \Fix I_{ S}\cap\mathring S=\{s_-\},\quad s\cap (\R\x\{1\})=\{s_+\}.
$$
Note that $I_S(s)=G^{-1}(s)$, $s\cap I_{ S}(s)\cap\mathring S=\{s_-\}$, and therefore
\beq\label{eq1}
G^{m}(s)\cap G^{m+1}(s)\cap\mathring S=\{s_-\},\quad m\in\N\cup\{0\}.
\eeq
Without loss of generality, we assume $s\subset [0,\infty)\x[0,1]$.
Since $G(I_{S}(s_+))=s_+$ and $G$ is isotopic to the identity,  
\beq\label{eq2}
G^{\ell}(s_+)\in [s_+,\infty)\x\{1\},\quad \ell\in\N.
\eeq

Suppose that there is $q_0\in\N$ such that  $G^{q_0}(s)\cap\Fix I_S\cap \mathring S\neq\{s_-\}$. 
Then there exists
$$
(0,y_q)\in G^{q}(s)\cap\Fix I_S\cap\mathring S,\quad q\geq q_0
$$ 
with $y_q>y_{q+1}$ different from $s_-$ due to \eqref{eq1}, \eqref{eq2}, and $G^{q}(s_-)=s_-$. See the first picture of Figure \ref{odd case}.  Due to  Lemma \ref{lem:fix loci} and Lemma \ref{lem:intersection point=>symmetric periodic point}, this shows the existence of infinitely many interior symmetric odd-periodic points of $f$.

Suppose that $G^{\ell}(s)\cap\Fix I_S\cap \mathring S=\{s_-\}$ for every $\ell\in\N$. Let $D$ be the domain enclosed by $s\cup I_S(s)$ and $\R\x\{1\}$. Observe that 
$$
G^{\ell}(D)\cap G^{\ell+1}(D)=G^{\ell+1}(s),\quad \ell\in\N\cup\{0\}
$$
by \eqref{eq1}, see the second picture of Figure \ref{odd case}. Since $\bigcup_{\ell\in\N} G^\ell(D)\subset [0,\infty)\x[0,1]$ is connected and has infinite area, there exists $n(\ell)\in\N$, $\ell\in\N$ such that 
$$
\lim_{\ell\to\infty}n(\ell)=\infty,\quad G^{\ell}(s)\cap (\{n\pi\}\x[0,1])\cap\mathring S\neq\emptyset,\quad 0\leq\forall n\leq n(\ell).
$$
Since $\{n\pi\}\x[0,1]$'s are the lift of $\Fix I_A$, this shows that there are infinitely many interior symmetric odd-periodic points of $f$.
\end{proof}

\subsection{Proofs of Theorem \ref{thm:PB theorem} and Theorem \ref{thm:necessary and sufficient condition}}\quad\\[-1.5ex]

As we emphasized, we do {\em not} require here $f$ to be area preserving. Recall that we denote by $\Fix I_\Om=Y_1\sqcup\cdots\sqcup Y_n$ where $Y_i$ are disjoint intervals for an invariant connected possibly non-closed domain $\Om\subset\R^2$.
\begin{figure}[htb]
\includegraphics[width=.5\textwidth,clip]{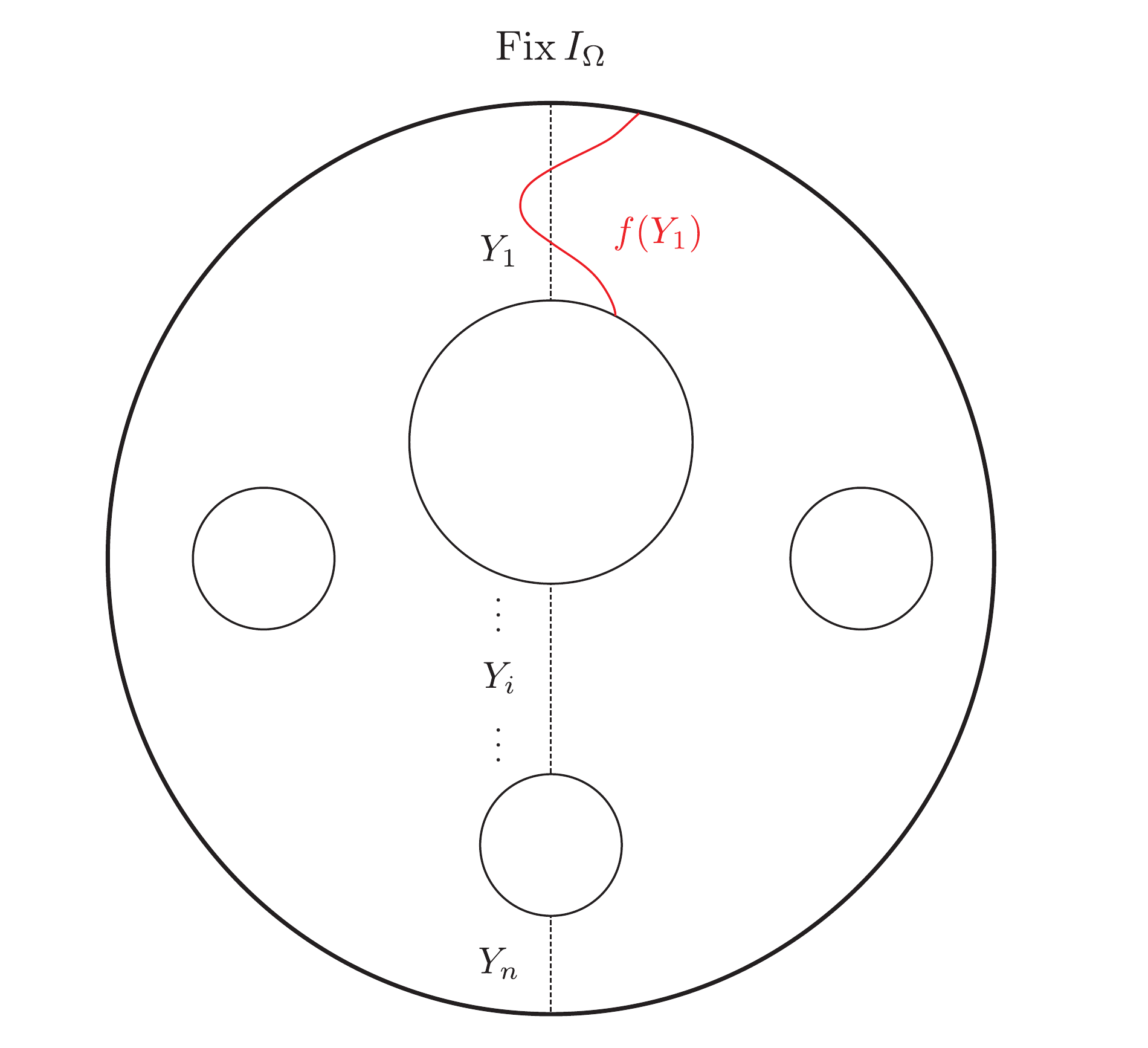}
\caption{}\label{iff}
\end{figure}
\begin{Thm}\label{thm:twist}
Let $f$ be a reversible map on $\Om$ isotopic to the identity. Then $f$ has a (an interior) symmetric fixed point on $Y_i$ if and only if $f(Y_i)\cap Y_i\neq\emptyset$ ($f(Y_i)\cap Y_i\cap\mathring \Om\neq\emptyset$) for  $i\in\{1,\dots,n\}$. 
\end{Thm}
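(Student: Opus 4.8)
The plan is to treat the two implications separately: the forward direction is immediate, and the backward direction reduces to Proposition \ref{period 2 real periodic point=>real fixed point} after one short observation powered by reversibility.

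\emph{Forward direction.} If $z\in Y_i$ is a symmetric fixed point of $f$, then $f(z)=z\in Y_i$, so $z\in f(Y_i)\cap Y_i$; and if in addition $z\in\mathring\Om$, then $z\in f(Y_i)\cap Y_i\cap\mathring\Om$. Nothing further is needed here.

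\emph{Backward direction.} Suppose $f(Y_i)\cap Y_i\neq\emptyset$ and pick $p$ in this set; since $f$ is a homeomorphism, $q:=f^{-1}(p)$ is the unique preimage, and $p\in f(Y_i)$ means precisely that $q\in Y_i$. So $q$ and $f(q)=p$ both lie on $Y_i\subset\Fix I_\Om$. The first step is to note that this forces $f^2(q)=q$: reversibility gives $f(q)=f(I_\Om(q))=I_\Om(f^{-1}(q))$, and applying $I_\Om$ and using $f(q)\in\Fix I_\Om$ yields $f(q)=f^{-1}(q)$. Now split into cases. If $p=q$, then $q$ is a fixed point of $f$ lying on $Y_i$, hence a symmetric fixed point on $Y_i$ (interior if $p\in\mathring\Om$), and we are done. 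If $p\neq q$, then $q$ is a genuine symmetric $2$-periodic point on $\Fix I_\Om$ whose image $f(q)=p$ lies on the \emph{same} connected component $Y_i$ of $\Fix I_\Om$; Proposition \ref{period 2 real periodic point=>real fixed point} then produces a symmetric fixed point $z^\ast$ of $f$ on the subinterval $[q,p]\subseteq Y_i$. Since $f(q)=p\neq q$, the endpoint $q$ is not fixed, so in fact $z^\ast\in[q,p]\setminus\{q\}$.

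\emph{Interior refinement.} When the hypothesis is strengthened to $f(Y_i)\cap Y_i\cap\mathring\Om\neq\emptyset$, I would pick $p$ in that set and run the same dichotomy; in the case $p=q$ the conclusion is already interior, and in the case $p\neq q$ one observes that the only points of the interval $Y_i$ that can fail to lie in $\mathring\Om$ are its (at most two) endpoints, and $p$ is not one of them since $p\in\mathring\Om$. Hence $[q,p]\setminus\{q\}\subset\mathring\Om$, so $z^\ast$ is an interior symmetric fixed point.

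The only genuinely nontrivial ingredient is Proposition \ref{period 2 real periodic point=>real fixed point} (whose proof carries the variation-of-angle/index computation), and that is already established; the remaining work is the one-line reversibility computation giving $f^2(q)=q$ and the bookkeeping for the interior case. Accordingly, I do not anticipate a serious obstacle here — the substance has effectively been front-loaded into the earlier $2$-periodic-point proposition.
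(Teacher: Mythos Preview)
Your proof is correct and follows essentially the same route as the paper's: the forward direction is trivial, and for the converse you pick a point in $f(Y_i)\cap Y_i$, use reversibility to see it is a symmetric $2$-periodic point with image on the same component $Y_i$, and then invoke Proposition~\ref{period 2 real periodic point=>real fixed point} to locate a symmetric fixed point on the connecting subinterval. Your treatment of the interior refinement is slightly more explicit than the paper's (which simply asserts it ``follows in the same way''), but the content is identical.
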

\begin{proof}
See Figure \ref{iff}. If $f$ has a symmetric fixed point $z\in Y_i$,  $z=f(z)\in f(Y_i)\cap Y_i$ for $1\leq i\leq n$. For the converse, suppose that $z\in f(Y_i)\cap Y_i$ for $1\leq i\leq n$. Then $z=f(z_0)$ for some $z_0\in Y_i$. We assume $z\neq z_0$ since otherwise we are done. Since both $z_0$, $z\in Y_i\subset\Fix I_\Om$, we compute
$$
f(z)=f\circ I_\Om(z)=I_\Om\circ f^{-1}(z)=I_\Om(z_0)=z_0.
$$
This implies $z$ is a symmetric 2-periodic point on $Y_i$ with $f(z)\in Y_i$, and hence Proposition \ref{period 2 real periodic point=>real fixed point} guarantees the existence of a symmetric fixed point of $f$ in between $z$ and $f(z)$ and hence on $Y_i$. The assertion concerning an interior symmetric fixed point follows in the same way.
\end{proof}

Although Theorem \ref{thm:twist} directly implies the theorem below, we outline two more elementary proofs of this. The first proof makes use of the fixed locus of the involution $f\circ I_A$ while the second proof uses the fixed point index. 

\begin{Thm}
Let $f$ be a reversible map on $A$ isotopic to the identity with the boundary twist condition. Then there is a symmetric fixed point of $f$ on each connected component of $\Fix I_A$. 
\end{Thm}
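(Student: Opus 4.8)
The plan is to deduce this from Theorem \ref{thm:twist}, so it suffices to check that the boundary twist condition forces $f(Y_i)\cap Y_i\neq\emptyset$ for each of the two components $Y_\pm$ of $\Fix I_A$. Recall $\Fix I_A = Y_+\sqcup Y_-$ with $Y_\pm=\{0\}\times[\pm1,\pm2]$, and each $Y_\pm$ joins the inner boundary circle $|z|=1$ to the outer boundary circle $|z|=2$. I will work on $Y_+$; the argument for $Y_-$ is identical. Pass to the lift $F=(F_1,F_2):S\to S$ provided by the boundary twist condition, where $S=\{0\leq|y|\leq1\}$ is the universal cover of $A$ via an identification sending $\R\times\{0\}$ to the inner circle and $\R\times\{1\}$ to the outer circle. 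Under this cover, the component $Y_+$ lifts to the family of vertical segments $\{k\}\times[0,1]$ for $k\in\Z$ (choosing coordinates so that one lift of $Y_+$ is $\{0\}\times[0,1]$). Then $f(Y_+)\cap Y_+\neq\emptyset$ is equivalent to the statement that the lifted arc $F(\{0\}\times[0,1])$ meets some vertical line $\{k\}\times[0,1]$, $k\in\Z$; that is, there exists $t\in[0,1]$ with $F_1(0,t)\in\Z$.

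Now I run an intermediate-value argument on the continuous function $t\mapsto F_1(0,t)$. The boundary twist condition says $F_1(x,0)<x<F_1(x,1)$ for all $x\in\R$; at $x=0$ this gives $F_1(0,0)<0<F_1(0,1)$. Hence $F_1(0,\cdot)$ is a continuous real-valued function on $[0,1]$ that is negative at $t=0$ and positive at $t=1$, so by the intermediate value theorem there is $t_0\in(0,1)$ with $F_1(0,t_0)=0$, and moreover $t_0\in(0,1)$ means the corresponding point of $A$ is interior. Therefore $F(0,t_0)=(0,F_2(0,t_0))$ lies on the lift $\{0\}\times[0,1]$ of $Y_+$, which shows $f(Y_+)\cap Y_+\cap\mathring A\neq\emptyset$. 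By Theorem \ref{thm:twist}, $f$ has an interior symmetric fixed point on $Y_+$, and the same reasoning with $Y_-$ (lifting $Y_-$ to the vertical segments $\{k\}\times[-1,0]$ and applying the twist inequality at the appropriate point) produces an interior symmetric fixed point on $Y_-$.

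The only genuinely delicate point is bookkeeping about the cover: one must make sure the lift $F$ in the boundary twist condition is compatible with the lift $I_S$ of $I_A$ used elsewhere, so that the reversibility relation $F\circ I_S=I_S\circ F^{-1}$ holds for the chosen lift and so that the vertical lines $\{k\}\times[0,1]$ really are the full preimage of $Y_+$. This is routine — any lift of $f$ differs from $F$ by a deck transformation, i.e. an integer horizontal translation, which does not affect whether $F_1(0,t)$ hits $\Z$ — but it should be spelled out. I would also remark that this argument recovers, and slightly refines, the classical picture: the boundary twist condition is exactly the statement that each component of $\Fix I_A$ is ``twisted'' by $f$ in the sense of the twist-map terminology introduced after Theorem \ref{thm:necessary and sufficient condition}, so the conclusion is an instance of the Poincar\'e--Birkhoff phenomenon localized to the symmetry axis. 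As promised, I will then sketch the two alternative elementary proofs: one tracking $\Fix(f\circ I_A)$, which by Lemma \ref{lem:fixd loci of involutions} is a disjoint union of two arcs each of whose endpoints the twist condition forces onto opposite sides of a lift of $Y_\pm$, and one computing the fixed point index $i(\gamma,f\circ\gamma)$ along a loop built from $Y_\pm$ and its $f$-image and invoking Proposition \ref{prop:mirror loop has the same index} as in the proof of Proposition \ref{period 2 real periodic point=>real fixed point}.
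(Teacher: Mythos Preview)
Your proposal is correct and matches the paper's own approach: the paper explicitly remarks that the theorem follows directly from Theorem~\ref{thm:twist} and then outlines the same two elementary alternatives (via $\Fix(f\circ I_A)$ and via the fixed point index) that you promise to sketch. One small slip to fix: the lifts of $Y_-$ are not segments $\{k\}\times[-1,0]$ but rather vertical segments in the \emph{same} strip $\R\times[0,1]$, shifted by half a period from those of $Y_+$ (e.g.\ $\{\pi+2\pi k\}\times[0,1]$); your phrase ``applying the twist inequality at the appropriate point'' shows you have the right idea, so this is only a notational correction.
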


\begin{figure}[htb]
\includegraphics[width=0.9\textwidth,clip]{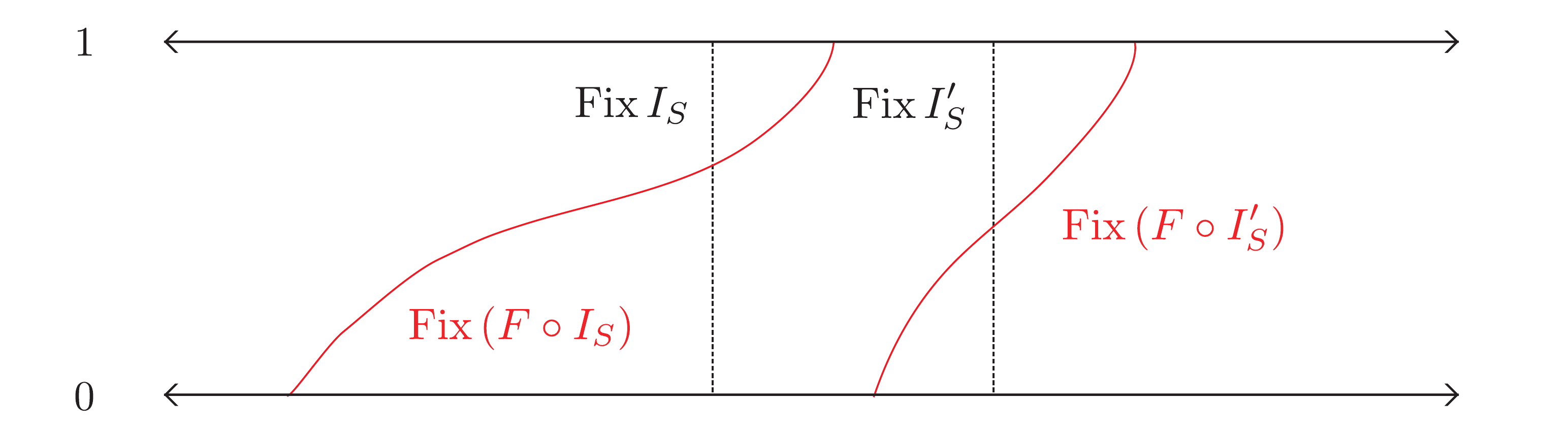}
\caption{}\label{fig:PB proof2 strip}
\end{figure}
\noindent\textsc{Sketch of Proof 1.}
See Figure \ref{fig:PB proof2 strip}. We lift $f:A\to A$ to a reversible map $$F=(F_1,F_2): S=\R\x[0,1]\to S$$ such that $F_1(0,1)\in(0,2\pi]$. Then by the boundary twist condition, 
$$
\Fix (F\circ I_S)\cap(\R\x\{1\})\in(0,\pi],\quad \Fix (F\circ I_S)\cap(\R\x\{0\})<0.
$$
Therefore we have 
$$
\Fix (F\circ I_S)\cap\Fix I_S\cap\mathring S\neq\emptyset,
$$
i.e. there is an interior symmetric fixed point of $f$ on one connected component of $\Fix I_A$. To find another interior symmetric fixed point of $f$ on the other connected component of $\Fix I_A$, consider the reflection $I_S':(x,y)\mapsto(2\pi-x,y)$ which is another lift of $I_A$ to $S$. Then $F\circ I'_S$ is also reversible and we have
$$
\Fix (F\circ I'_S)\cap\Fix I'_S\cap\mathring S\neq\emptyset.
$$
by the boundary twist condition again. This gives another symmetric fixed point of $f$.
\hfill$\square$\\[-1ex]

\begin{figure}[htb]
\includegraphics[width=.8\textwidth,clip]{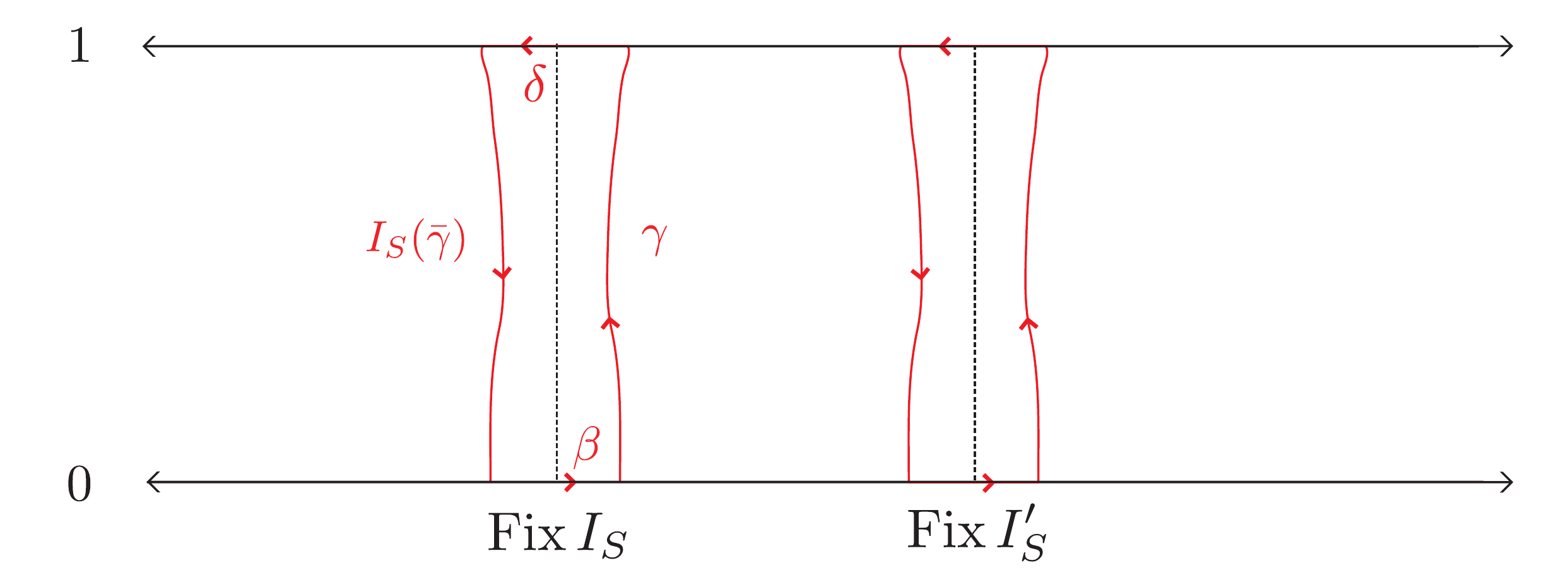}
\caption{}\label{PB proof 1}
\end{figure}
\noindent\textsc{Sketch of Proof 2.}
See Figure \ref{PB proof 1}.
Suppose that a lift $F:S\to S$ of $f:A\to A$ does not have any fixed points on $\Fix I_S$. Then there is no fixed point of $F$ inside $(-\epsilon,\epsilon)\x[0,1]$ for small  $\epsilon>0$. Now we choose any path $\gamma:[0,1]\to(0,\epsilon)\x[0,1]$ from $\R\x\{0\}$ to $\R\x\{1\}$. Then due to the boundary twist condition,
\beq\label{eq:half integer}
i(\gamma,F\circ\gamma)\in\frac{1}{2}+\Z.
\eeq
Moreover as in the proof of Proposition \ref{period 2 real periodic point=>real fixed point}, we have 
\beq\label{eq:same index}
 i(I_S\circ \bar\gamma,F\circ I_S\circ\bar\gamma)=i(\gamma,F\circ\gamma).
\eeq
We choose auxiliary paths $\beta$ and $\delta$ in the boundary of $S$ as in Figure \ref{PB proof 1} so that $\alpha:=\gamma*\delta*I(\bar\gamma)*\beta$ forms a loop containing $\Fix I_S$. Then using \eqref{eq:half integer} and \eqref{eq:same index} we compute
$$
i(\alpha,F\circ\alpha)=i(I\circ \bar\gamma,F\circ I\circ\bar\gamma)+i(\gamma,F\circ\gamma)\in1+2\Z.
$$
Therefore there is a fixed point of $F$ inside the domain enclosed by the loop $\alpha$. This contradiction shows the existence of a symmetric fixed point of $f$ on one connected component of $\Fix I_A$.
In a similar way, we are able to find a fixed point of $F$ on $\Fix I_S'$ which gives another symmetric fixed point of $f$ on the other connected component of $\Fix I_A$.
\hfill$\square$\\[-1ex]

A standard argument shows the following, see for instance \cite[Corollary 8.6]{MS98}.
\begin{Cor}
Let $f:A\to A$ be a reversible map isotopic to the identity with the boundary twist condition. Then for  each $\frac{p}{q}\in\Q$ satisfying
$$
\min_{x\in\R}(F_1(x,0)-x)<\frac{2\pi p}{q}< \max_{x\in\R}(F_1(x,1)-x),
$$
there is a symmetric $q$-periodic point of $f$ with rotation number $\frac{p}{q}$ on each connected component of $\Fix I_A$. 
\end{Cor}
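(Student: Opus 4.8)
The plan is to deduce the corollary from Theorem~\ref{thm:PB theorem} (equivalently, from the criterion of Theorem~\ref{thm:twist}) by the classical device of replacing $f$ with the $q$-th iterate twisted by a covering translation of $S\to A$. Fix a lift $F=(F_1,F_2):S=\R\x[0,1]\to S$ of $f$ realizing the boundary twist condition, so $F_1(x,0)<x<F_1(x,1)$ for all $x\in\R$, and let $\sigma(x,y)=(x+2\pi,y)$ be the generating deck transformation. Since $f$ is reversible and isotopic to the identity, $F$ commutes with $\sigma$ and is reversible with respect to every lift of $I_A$ to $S$; in particular with respect to $I_S(x,y)=(-x,y)$ and $I_S'(x,y)=(2\pi-x,y)$, whose vertical fixed arcs project onto the two connected components of $\Fix I_A$ (these are the two reflections already used in the proof of Theorem~\ref{thm:PB theorem}). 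Assuming first that $p/q$ is in lowest terms, set $G:=\sigma^{-p}\circ F^q:S\to S$. This is a lift of $g:=f^q$, and any fixed point $\tilde z$ of $G$ satisfies $F^q(\tilde z)=\sigma^p(\tilde z)$, so its image $z\in A$ is a $q$-periodic point of $f$ whose $F$-orbit advances by $2\pi p$ over $q$ iterations, i.e. a $q$-periodic point of rotation number $p/q$.

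The one place where the reversible case goes beyond the classical one is that $G$ must remain reversible. From $F\circ I_S=I_S\circ F^{-1}$ one gets $F^q\circ I_S=I_S\circ F^{-q}$; since $\sigma$ commutes with $F$ and $\sigma\circ I_S=I_S\circ\sigma^{-1}$ (and likewise $\sigma\circ I_S'=I_S'\circ\sigma^{-1}$), a one-line computation gives
$$
G\circ I_S=\sigma^{-p}F^qI_S=\sigma^{-p}I_SF^{-q}=I_S\sigma^pF^{-q}=I_SF^{-q}\sigma^p=I_S\circ G^{-1},
$$
and the identical computation with $I_S'$ in place of $I_S$ shows $G\circ I_S'=I_S'\circ G^{-1}$. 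Moreover $G$ is isotopic to the identity on the strip $S$: it is a lift of $g\simeq\id_A$, hence isotopic to a covering translation $\sigma^k$, which in turn is isotopic to $\id_S$ through $(x,y)\mapsto(x+2\pi kt,y)$. Thus $G$ is simultaneously a reversible map on $(S,I_S)$ and on $(S,I_S')$ which is isotopic to the identity, and every fixed point of $G$ lying on $\Fix I_S$ (resp. $\Fix I_S'$) projects to a symmetric $q$-periodic point of $f$ with rotation number $p/q$ on the corresponding component of $\Fix I_A$.

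Now I would run the proof of Theorem~\ref{thm:PB theorem} for $G$. The hypothesis $\min_x(F_1(x,0)-x)<2\pi p/q<\max_x(F_1(x,1)-x)$ says exactly that $2\pi p/q$ lies strictly inside the twist interval of $f$; by the standard rotation-number bookkeeping for twist maps on the two boundary circles (see \cite[Cor.~8.6]{MS98}, as already invoked for Theorem~\ref{thm:PB theorem}) this forces $G$ to rotate the two boundary circles of $S$ past $2\pi p$ in opposite directions relative to the identity. Equivalently, the properly embedded arc $\Fix(G\circ I_S)$ — which joins $\R\x\{0\}$ to $\R\x\{1\}$ by Lemma~\ref{lem:fixd loci of involutions} applied to the orientation reversing involution $G\circ I_S$, since $G$ is isotopic to the identity — meets $\R\x\{0\}$ and $\R\x\{1\}$ on opposite sides of the vertical line $\Fix I_S$, and the same holds for $\Fix(G\circ I_S')$ relative to $\Fix I_S'$. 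By connectedness each such arc must cross the corresponding fixed line in $\mathring S$, so
$$
\Fix(G\circ I_S)\cap\Fix I_S\cap\mathring S\neq\emptyset,\qquad \Fix(G\circ I_S')\cap\Fix I_S'\cap\mathring S\neq\emptyset.
$$
By Lemma~\ref{lem:intersection point=>symmetric periodic point} these two interior points are symmetric fixed points of $G$, so projecting to $A$ yields two symmetric $q$-periodic points of $f$ of rotation number $p/q$, one on each component of $\Fix I_A$ (alternatively, $G(\Fix I_S)\cap\Fix I_S\cap\mathring S\neq\emptyset$ lets one quote Theorem~\ref{thm:twist} directly). If $\gcd(p,q)=d>1$, apply the argument to $p/d$ and $q/d$; the resulting $(q/d)$-periodic point is in particular a $q$-periodic point with rotation number $p/q$.

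I expect the main obstacle to be the reversibility bookkeeping of the second paragraph, together with keeping track of which lift of $I_A$ detects which component of $\Fix I_A$; the passage from the twist-interval hypothesis to the boundary behaviour of $G$ is classical and can simply be quoted from \cite{MS98}, exactly as in the proof of Theorem~\ref{thm:PB theorem}.
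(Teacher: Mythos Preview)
Your proposal is correct and is precisely the ``standard argument'' the paper alludes to without writing out: pass to the lift $G=\sigma^{-p}\circ F^q$, verify the reversibility with respect to both lifts $I_S$ and $I_S'$ of $I_A$ (your one-line computation, which is the only genuinely new ingredient beyond \cite[Cor.~8.6]{MS98}), and then rerun the proof of Theorem~\ref{thm:PB theorem} for $G$. The paper gives no proof beyond the citation, so there is nothing further to compare.
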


\subsubsection*{Acknowledgments} {I would like to thank Urs Fuchs for helpful discussions. The author is supported by DFG grant KA 4010/1-1. This work is also partially supported by SFB 878-Groups, Geometry, and Actions.}

\end{document}